\newcommand{\mz}{\ensuremath{\mathbb Z}}
\newcommand{\mr}{\ensuremath{\mathbb R}}
\newcommand{\shortmod}{\ensuremath{\negthickspace \negthickspace \negthickspace \pmod}}
\newcommand{\half}{\ensuremath{ \frac{1}{2}}}
\newcommand{\intR}{\int_{-\infty}^{\infty}}
\newcommand{\thalf}{\tfrac12}
\newcommand{\sumstar}{\sideset{}{^*}\sum}
\newcommand{\leg}[2]{\left(\frac{#1}{#2}\right)}
\newcommand{\e}[2]{e\left(\frac{#1}{#2}\right)}
\newcommand{\lp}{\left(}
\newcommand{\rp}{\right)}
\theoremstyle{plain}		
	\newtheorem{mytheo}{Theorem}[section]
	\newtheorem{myprop}[mytheo]{Proposition}
     \newtheorem{mylemma}[mytheo]{Lemma}
	\newtheorem{myconj}[mytheo]{Conjecture}
	\newtheorem{myremark}[mytheo]{Remark}
\theoremstyle{remark}
\numberwithin{equation}{section}
\begin{document}
\title{The first moment of quadratic Dirichlet L-functions}
\author{Matthew P. Young} 
\address{Department of Mathematics \\
	  Texas A\&M University \\
	  College Station \\
	  TX 77843-3368 \\
		U.S.A.}
\email{myoung@math.tamu.edu}

\begin{abstract}
We obtain an asymptotic formula for the smoothly weighted first moment of primitive quadratic Dirichlet L-functions at the central point, with an error term that is ``square-root'' of the main term.  Our approach uses a recursive technique that feeds the result back into itself, successively improving the error term.
\end{abstract}

\maketitle
\section{Introduction}
Quadratic twists of L-functions have been widely studied by many people; some of the motivating problems involve ranks of elliptic curves, and connections to Landau-Siegel zeros.  In this article we consider the first moment of the family of quadratic Dirichlet L-functions.  Our main result is
\begin{mytheo}
\label{theo:mainthm}
Let $\Phi:\mr^{+} \rightarrow \mr$ be a smooth function of compact support.  Then
\begin{equation}
\label{eq:mainthm}
 \sumstar_{(d,2)=1} L(\thalf, \chi_{8d}) \Phi\leg{d}{X} =  X P(\log{X}) + O(X^{1/2 + \varepsilon}),
\end{equation}
for some linear polynomial $P$ (depending on $\Phi$).  
\end{mytheo}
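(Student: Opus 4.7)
The approach is the classical approximate-functional-equation plus Poisson summation framework for moments of quadratic Dirichlet $L$-functions, enhanced by a recursive bootstrap that uses the theorem to improve its own error term.

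First I would apply the approximate functional equation to express
$$L(\thalf,\chi_{8d})=2\sum_{n\ge 1}\frac{\chi_{8d}(n)}{\sqrt n}\,V\!\left(\frac{n}{\sqrt{8d}}\right)$$
for a smooth, rapidly decaying weight $V$. Substituting into \eqref{eq:mainthm} and swapping orders of summation reduces matters to
$$\sum_{n\ge 1}\frac{1}{\sqrt n}\sumstar_{(d,2)=1}\chi_{8d}(n)\,V\!\left(\frac{n}{\sqrt{8d}}\right)\Phi\!\left(\frac{d}{X}\right),$$
a sum effectively supported on $n\ll X^{1/2+\varepsilon}$. I would then split the $n$-sum according to whether $n$ is a perfect square. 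For $n=m^2$ the character $\chi_{8d}(n)$ reduces to the indicator $\mathbf{1}_{(m,2d)=1}$, and the resulting double sum over $m$ and $d$ is evaluated asymptotically by contour shifts of the associated Dirichlet series to yield the main term $X P(\log X)$ with $P$ linear.

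For non-square $n$, quadratic reciprocity rewrites $\chi_{8d}(n)$ as a quadratic character in $d$ of modulus $8n$, after which Poisson summation in $d$ produces a dual sum
$$\frac{X}{8n}\sum_{k\in\mz}\widehat\Phi\!\left(\frac{kX}{8n}\right)G_k(n),$$
with $G_k(n)$ a Gauss-type sum of absolute value $\sqrt n$ that, after factoring, is proportional to a quadratic character $\chi^{*}(k)$ determined by $n$. Here is the key observation driving the recursion: switching the order of summation in $n$ and $k$ and applying reciprocity a second time to swap their roles, the off-diagonal rearranges into an expression of the schematic shape
$$\frac{X}{Y^{1/2}}\sum_{k\ne 0}\widehat\Phi(\cdots)\sumstar_{(\ell,2)=1}L(\thalf,\chi_{8\ell})\,\Psi_k\!\left(\frac{\ell}{Y}\right),$$
for a new scale $Y\ll X$ and smooth weights $\Psi_k$. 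The inner $\ell$-sum is again a first moment of exactly the type treated by the theorem, but at a reduced scale. Feeding in an inductive bound $O(X^\theta)$ for the error in \eqref{eq:mainthm} yields for the off-diagonal an estimate of the form $X^{1/2+\varepsilon}+X^{f(\theta)}$ with $f(\theta)<\theta$ for $\theta>\thalf$, having fixed point $\thalf$; iterating from the trivial starting point $\theta=1+\varepsilon$ drives the error down to $X^{1/2+\varepsilon}$ in finitely many steps.

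The main obstacle is verifying that the dualized off-diagonal genuinely reproduces the shape of \eqref{eq:mainthm}, rather than a more complicated mean value, and in particular that the ``secondary diagonal'' coming from $k=\square$ in the dual sum either cancels via the sign of the functional equation or can be absorbed into the polynomial $P(\log X)$. Almost equally delicate is tracking the smooth weights $\Psi_k$ across iterations, since the Fourier and Mellin transforms involved tend to widen the support of $\Phi$; the induction hypothesis must be formulated with explicit uniformity in the test function (for example in terms of Sobolev norms of $\Phi$) so that repeated appeals to the theorem remain quantitatively controlled.
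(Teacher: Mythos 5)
Your outline reproduces the standard Soundararajan-style framework (approximate functional equation, Poisson summation in $d$, Gauss sums $G_k(n)$, separate treatment of square and non-square frequencies), but it misses the point where the paper's recursion actually lives, and the recursion you propose in its place is doubtful. The central difficulty --- the one Goldfeld--Hoffstein identify as the obstruction to going below $X^{3/4}$ --- is the squarefree (primitivity) condition on $d$ in $\sumstar$, and your write-up never addresses it. The paper removes it by M\"obius inversion over square divisors $a^2 \mid d$, splits into $a \leq Y$ and $a > Y$, and it is the tail $a > Y$ that is recursively recognized as the \emph{same} twisted first moment $M(\alpha,l)$ at the reduced scale $X/c^2$; feeding in an error exponent $f$ there and choosing $Y = X^{1/4}$ gives the contraction $f \mapsto (f+\tfrac12)/2$. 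By contrast, you place the recursion inside the dualized off-diagonal after Poisson summation. That does not work as described: after summing over $n$ the dual object is a short sum over frequencies $k$ (effectively $k \ll X^{\varepsilon}$ for the relevant range) of ratios of $L$-functions $L(\tfrac12+w,\chi_{k})$, which in the paper is handled by a single contour shift and an absolute-convergence bound --- not by reinterpreting it as the original moment at scale $Y$ and iterating. Without the M\"obius splitting, your argument has no mechanism for beating the $X^{3/4}$ barrier coming from the squarefree sieve.

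Two further points. First, you correctly flag as ``the main obstacle'' the secondary terms from $k = \square$ in the dual sum, but the resolution is neither a sign cancellation from the functional equation nor absorption into $P(\log X)$: in the paper these terms, together with the $k=0$ term and the two recursive main terms from the $a>Y$ range, assemble into two contour integrals with \emph{identical} integrands on the lines $\mathrm{Re}(s)=\varepsilon$ and $\mathrm{Re}(s)=-\varepsilon$, so their sum collapses to the residue at $s=0$, which is exactly the conjectured main term. Each of the four pieces individually is stuck at $O(X^{3/4+\varepsilon})$ (absent progress on zero-free regions), so this exact cancellation is essential and cannot be waved through. Second, the induction must be run not on the plain first moment but on the shifted, twisted moments $M(\alpha,l)$ with error $O(l^{1/2+\varepsilon}X^{f+\varepsilon})$, because the recursive step produces twists $\chi_{8d}(lr)$ and shifts $\tfrac12+\alpha+s$; your concern about tracking the test functions is legitimate but secondary to formulating the hypothesis in this generality.
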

Jutila \cite{Jutila} and Vinogradov-Takhtadzhyan \cite{VT} obtained a similar result but with an error term of size $X^{3/4 + \varepsilon}$, and Goldfeld-Hoffstein \cite{Goldfeld} obtained an error of size $X^{19/32 + \varepsilon}$, using properties of certain Eisenstein series (one difference to note is that these authors considered the unsmoothed moment).  A. Kontorovich has informed me that an error term of size $O(X^{1/2 + \varepsilon})$ is essentially implicit in \cite{Goldfeld}, so that Theorem \ref{theo:mainthm} does not constitute an improvement of the error term in the smoothed version of the first moment.  However, our method seems to have some novelty and can almost surely be used to improve the error terms in other moment problems involving primitive quadratic twists.

Our approach is most similar to the work of Soundararajan \cite{Sound}, who obtained an asymptotic formula with a power savings for the third moment of this family.  Indeed, the proof of Theorem \ref{theo:mainthm} required significant ``off-diagonal'' analysis of the same type performed by Soundararajan.  Actually, we required two new ingredients not found in \cite{Sound}: most importantly, we used recursion to successively improve the exponent in the error term which in the limit converges to $1/2$.  Secondly, we performed an intricate analysis of certain subsidiary terms that can be rather easily bounded by $O(X^{3/4 + \varepsilon})$, but seem to rely on the Riemann Hypothesis to be improved (on an individual basis); fortunately, it turns out that these terms all cancel out! (See Section \ref{section:mainterms}.)

The use of recursion to study mean values of real characters was fruitfully applied by Heath-Brown in his work on the large sieve for quadratic characters \cite{H-B}.  One of the primary difficulties when studying mean values of quadratic characters (and more generally, characters of fixed order) is treating the squarefree (i.e. primitivity) condition.  Heath-Brown succeeded at this problem in \cite{H-B} by relaxing the condition that $d$ is squarefree to the condition that the square divisors of $d$ are small (note that this uses positivity and the fact that he is obtaining an upper bound and not an asymptotic formula).  In a sense, the use of recursion in the proof of Theorem \ref{theo:mainthm} is what allows us to treat the squarefree condition in an essentially optimal way.  Soundararajan (private communication) has also indicated to us that an iterative method can potentially be useful in these moment problems.  The paper \cite{BY} also used recursion in the study of the first moment of cubic Dirichlet L-functions, but in that work we did not identify the cancellation between various subsidiary terms.  

Goldfeld and Hoffstein \cite{Goldfeld} point out that it seems unlikely to improve \eqref{eq:mainthm} without some substantial improvement in the zero-free region for the Riemann zeta function, simply due to the state of knowledge on the distribution of square-free integers.


\section{Preliminaries}
Let $\chi_d$ be the quadratic Dirichlet character associated to the fundamental discriminant $d$.  We shall work with discriminants of the form $8d$ where $d$ is odd, squarefree, and positive, so that $\chi_{8d}$ is an even, primitive character of conductor $8d$.

The functional equation for such quadratic Dirichlet L-functions reads
\begin{equation}
\Lambda(s, \chi_{8d}) = \left(\frac{8d}{\pi}\right)^{s/2} \Gamma\left(\frac{s}{2} \right) L(s, \chi_{8d}) = \Lambda(1-s,\chi_{8d}),
\end{equation}
or in its asymmetric form
\begin{equation}
L(s, \chi_{8d}) = X(s) L(1-s, \chi_{8d}),
\end{equation}
where
\begin{equation}
X(\thalf + u) = \left(\frac{8d}{\pi} \right)^{-u } \frac{\Gamma\left(\frac{1/2 - u}{2} \right)}{\Gamma\left(\frac{1/2 + u }{2} \right)}.
\end{equation}

\subsection{Approximate functional equation}
Our basic formula to represent a value of the $L$-functions inside the critical strip is the following
\begin{myprop}[Approximate functional equation]
Let $G(s)$ be an entire, even function, bounded in any strip $-A \leq \text{Re}(s) \leq A$.  Then
\begin{equation}
L(\thalf + \alpha, \chi)
=
\sum_{n} \frac{\chi_{8d}(n) }{n^{\thalf + \alpha}} V_{\alpha} \left(\frac{n}{d^{1/2}}\right)
+ X_{\alpha} \sum_n \frac{\chi_{8d}(n)}{n^{\thalf - \alpha}} V_{-\alpha} \left(\frac{n}{d^{1/2}}\right),
\end{equation}
where $V$, $g$, and $X_{\alpha}$ are given by the following:
\begin{equation}
\label{eq:V}
V_{\alpha}(x) = \frac{1}{2 \pi i} \int_{(1)} \frac{G(s)}{s} g_{\alpha}(s) x^{-s} ds,
\end{equation}
\begin{equation}
g_{\alpha}(s) = \left(\frac{8}{\pi}\right)^{\frac{s}{2}}
\frac{\Gamma\left(\frac{\half + \alpha + s}{2} \right)}{\Gamma\left(\frac{\half + \alpha}{2} \right)} 
\end{equation}
and
\begin{equation}
 X_{\alpha} = \left(\frac{8d}{\pi} \right)^{-\alpha} 
\frac{\Gamma\left(\frac{\thalf- \alpha}{2} \right)}{\Gamma\left(\frac{\thalf + \alpha}{2} \right)}.
\end{equation}
\end{myprop}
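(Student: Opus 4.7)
I would use the standard Mellin--Barnes contour method tailored to the specific $g_\alpha$ appearing above (implicitly assuming $G(0)=1$, so that the simple pole of $G(s)/s$ at the origin has residue $1$).

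Begin with the contour integral
\[
 I \;=\; \frac{1}{2\pi i}\int_{(2)} L\!\left(\thalf+\alpha+s,\chi_{8d}\right) g_\alpha(s)\, \frac{G(s)}{s}\, d^{s/2}\, ds .
\]
On the line $\mathrm{Re}(s)=2$ the $L$-function is absolutely convergent, so interchanging sum and integral and shifting the inner contour to $\mathrm{Re}(s)=1$ (which is permissible because the integrand is holomorphic and of rapid decay there, thanks to $G$) gives
\[
 I \;=\; \sum_n \frac{\chi_{8d}(n)}{n^{\thalf+\alpha}} V_\alpha\!\left(\frac{n}{d^{1/2}}\right).
\]
On the other hand, I move the contour in $I$ all the way to $\mathrm{Re}(s)=-2$. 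The only pole crossed is the simple pole at $s=0$, where $g_\alpha(0)=1$ and $G(0)=1$, so the residue is $L(\thalf+\alpha,\chi_{8d})$. This produces $I = L(\thalf+\alpha,\chi_{8d}) + I'$, with $I'$ the integral on $(-2)$.

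To handle $I'$, substitute $s\mapsto -s$ (using $G(-s)=G(s)$) to return to $\mathrm{Re}(s)=2$, and apply the functional equation $L(\thalf+\alpha-s,\chi_{8d}) = X(\thalf+\alpha-s) L(\thalf-\alpha+s,\chi_{8d})$. The heart of the matter is the gamma-factor identity
\[
 g_\alpha(-s)\, X\!\left(\thalf+\alpha-s\right) d^{-s/2} \;=\; X_\alpha\, g_{-\alpha}(s)\, d^{s/2},
\]
which one checks by cancelling the common $\Gamma\bigl(\tfrac{\thalf+\alpha-s}{2}\bigr)$ factor, regrouping $(8d/\pi)^{-(\alpha-s)} = (8d/\pi)^{-\alpha}(8/\pi)^{s} d^{s}$, and then multiplying and dividing by $\Gamma\bigl(\tfrac{\thalf-\alpha}{2}\bigr)$ to manufacture $X_\alpha$ and the numerator/denominator structure of $g_{-\alpha}(s)$. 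Once this identity is in place, $-I'$ becomes $X_\alpha$ times the analogue of $I$ with $\alpha$ replaced by $-\alpha$, and expanding that $L$-function as a Dirichlet series on $\mathrm{Re}(s)=2$ yields $X_\alpha \sum_n \chi_{8d}(n) n^{\alpha-\thalf} V_{-\alpha}(n/d^{1/2})$.

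The contour shifts are routine because $G$ provides the rapid decay in the imaginary direction needed to annihilate boundary contributions at $\pm i\infty$, and the Dirichlet-series expansion of the $L$-function is likewise standard. The one step that demands real care is the gamma-factor identity above: although it is morally forced by the symmetry of the functional equation, one has to bookkeep the powers of $8$, $\pi$, and $d$ carefully to confirm that the result assembles precisely into $X_\alpha\, g_{-\alpha}(s)$, and not into some slight variant. I expect that single algebraic identity to be the only nontrivial point in the argument.
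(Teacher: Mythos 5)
Your proposal is the standard Mellin--Barnes derivation of the approximate functional equation; the paper supplies no proof of this proposition (it is quoted as a known result, essentially Lemma 2.1 of Soundararajan \cite{Sound}), and what you describe is exactly the intended argument. Your gamma-factor identity is correct: writing $X(\thalf+u)=(8d/\pi)^{-u}\Gamma\bigl(\tfrac{1/2-u}{2}\bigr)/\Gamma\bigl(\tfrac{1/2+u}{2}\bigr)$ with $u=\alpha-s$, cancelling $\Gamma\bigl(\tfrac{1/2+\alpha-s}{2}\bigr)$ and regrouping the powers of $8$, $\pi$, $d$ does yield $g_\alpha(-s)\,X(\thalf+\alpha-s)\,d^{-s/2}=X_\alpha\,g_{-\alpha}(s)\,d^{s/2}$, and the bookkeeping $I=L(\thalf+\alpha,\chi_{8d})+I'$ with $-I'=X_\alpha$ times the $(-\alpha)$-analogue of $I$ assembles into the stated formula (given $G(0)=1$, which you rightly flag and which the explicit $G$ of Remark \ref{remark:zero} satisfies).

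One assertion needs justification: in the shift from $\mathrm{Re}(s)=2$ to $\mathrm{Re}(s)=-2$ you claim the only pole crossed is at $s=0$, but $g_\alpha(s)$ contains $\Gamma\bigl(\tfrac{1/2+\alpha+s}{2}\bigr)$, which has a simple pole at $s=-\thalf-\alpha$, squarely inside your strip. The claim is nevertheless true, for a reason you should state: $\chi_{8d}$ is even and primitive, so $\Lambda(w,\chi_{8d})=(8d/\pi)^{w/2}\Gamma(w/2)L(w,\chi_{8d})$ is entire; equivalently the trivial zero $L(0,\chi_{8d})=0$ cancels that gamma pole, so $g_\alpha(s)L(\thalf+\alpha+s,\chi_{8d})$ is entire and only the pole of $G(s)/s$ at $s=0$ contributes. (An alternative repair that avoids the issue entirely: shift only to $\mathrm{Re}(s)=-\delta$ for small $\delta>0$, perform the substitution $s\to-s$ and apply the functional equation, and then push the resulting integral from $\mathrm{Re}(s)=\delta$ out to $\mathrm{Re}(s)=2$, crossing no poles.) A minor further point: for general $G$ merely bounded in vertical strips, the decay justifying the contour shifts comes from the exponential decay of the gamma factor against the polynomial growth of the $L$-function on vertical lines, not from $G$ itself.
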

\begin{myremark}
\label{remark:zero}
 We also choose $G$ so that $G(\pm \alpha) = G(\thalf \pm \alpha) = 0$; for definiteness, one can take
\begin{equation}
 G(s) = e^{s^2} \frac{(s^2 + \alpha^2)((s-\thalf)^2 - \alpha^2)((s+\thalf)^2 - \alpha^2)}{\alpha^2 (\frac14 - \alpha^2)^2}.
\end{equation}

\end{myremark}
The purpose of this Remark is to simplify certain future calculations (these zeros may cancel certain poles that arise).

\subsection{Poisson summation formula}
We now quote Soundararajan's result (see the proof of Lemma 2.6 of \cite{Sound}).
\begin{mylemma}
\label{lemma:Poisson}
 Let $F$ be a smooth function with compact support on the positive real numbers, and suppose that $n$ is an odd integer.  Then
\begin{equation}
 \sum_{(d,2) = 1} \leg{d}{n} F\left(\frac{d}{Z}\right) = \frac{Z}{2n} \leg{2}{n} \sum_{k \in \mz} (-1)^k G_k(n) \widehat{F}\left(\frac{kZ}{2n}\right),
\end{equation}
where
\begin{equation}
 G_k(n) = \left(\frac{1-i}{2} + \leg{-1}{n} \frac{1+i}{2}\right) \sum_{a \shortmod{n}} \leg{a}{n} \e{ak}{n},
\end{equation}
and
\begin{equation}
 \widehat{F}(y) = \intR (\cos(2 \pi x y) + \sin(2 \pi x y)) F(x) dx
\end{equation}
is a Fourier-type transform of $F$.
\end{mylemma}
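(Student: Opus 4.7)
The proof follows the standard approach for Poisson summation against a quadratic character: split the sum over $(d,2)=1$ into residue classes modulo $2n$ and apply classical Poisson summation to each. Since $n$ is odd, the Chinese Remainder Theorem identifies residues mod $2n$ with pairs (residue mod $2$, residue mod $n$); restricting to odd residues simultaneously enforces $(d,2)=1$ and fixes $\leg{d}{n} = \leg{a}{n}$ on each class.

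For each odd $a$ modulo $2n$, writing $d = a + 2nm$ and applying Poisson summation to $\sum_m F((a+2nm)/Z)$ gives
\begin{equation*}
\sum_{m \in \mz} F\!\left(\frac{a+2nm}{Z}\right) = \frac{Z}{2n} \sum_{k \in \mz} \e{ak}{2n} \widetilde{F}\!\left(\frac{kZ}{2n}\right), \quad \widetilde{F}(y) := \intR F(x) e(-xy)\,dx.
\end{equation*}
Summing against $\leg{a}{n}$ over odd $a \bmod 2n$ produces
\begin{equation*}
\sum_{(d,2)=1} \leg{d}{n} F(d/Z) = \frac{Z}{2n} \sum_{k \in \mz} \widetilde{F}\!\left(\frac{kZ}{2n}\right) \tau_k(n), \quad \tau_k(n) := \sum_{\substack{a \bmod 2n \\ a \, \text{odd}}} \leg{a}{n} \e{ak}{2n}.
\end{equation*}

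The key step is the evaluation of $\tau_k(n)$. Detecting the odd condition via $\thalf(1-(-1)^a)$ and using that $\leg{\cdot}{n}$ has period $n$, the sum splits into two character sums modulo $n$, separated by the parity of $k$. For even $k = 2k'$ one obtains directly the quadratic Gauss sum $g_{k'}(n) := \sum_{a \bmod n} \leg{a}{n} e(ak'/n)$. For odd $k$, the substitution $k \mapsto k+n$ (which flips parity, using that $n$ is odd) reduces matters to a Gauss sum at argument $(k+n)/2 \equiv k \cdot 2^{-1} \pmod n$, producing the factor $\leg{2}{n}$ via $\leg{(k+n)/2}{n} = \leg{k}{n}\leg{2}{n}$. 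To pass from the complex kernel $e(\cdot)$ to the real kernel $\cos + \sin$, one pairs the $k$ and $-k$ terms: using $g_{-k}(n) = \leg{-1}{n} g_k(n)$ (from $a \mapsto -a$) together with $\widetilde{F}(\pm y) = C(y) \mp i S(y)$, where $C, S$ are the cosine and sine transforms of $F$, the coefficient $\thalf(1-i) + \leg{-1}{n}\thalf(1+i)$ built into $G_k(n)$ is exactly what is required to repackage the symmetrized contribution as $\leg{2}{n}(-1)^k G_k(n) \widehat{F}(kZ/(2n))$.

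The main obstacle is careful sign-tracking: the parity of $k$, the residue of $n$ modulo $4$ (which determines $\leg{-1}{n}$ and hence the Gauss-sum value $g_1(n) = \sqrt{n}$ or $i\sqrt{n}$), and the interaction between these must all be arranged so that the even-$k$ and odd-$k$ contributions fuse into the single clean formula with the stated prefactor. None of the individual manipulations is deep, but the bookkeeping of signs and of the normalization defining $G_k(n)$ is the only place where an error is likely to occur.
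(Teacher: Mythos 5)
The paper does not prove this lemma at all --- it is quoted verbatim from Soundararajan (Lemma 2.6 of \cite{Sound}) --- and your sketch reproduces exactly the standard argument from that source: split $d$ into odd residue classes $a \bmod 2n$, apply classical Poisson summation in each class, evaluate the resulting twisted sum $\tau_k(n)=\sum_{a \text{ odd} \, (2n)} \leg{a}{n}\e{ak}{2n}$ by parity of $k$ to get $(-1)^k\leg{2}{n}\sum_{a(n)}\leg{a}{n}\e{ak}{n}$, and then symmetrize $k\leftrightarrow -k$ using $g_{-k}(n)=\leg{-1}{n}g_k(n)$ to trade the exponential kernel for $\cos+\sin$; the normalizing factor $\tfrac{1-i}{2}+\leg{-1}{n}\tfrac{1+i}{2}$ (which equals $1$ or $-i$ according as $\leg{-1}{n}=\pm1$) does check out in both cases. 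One small caution: your justification ``$\leg{(k+n)/2}{n}=\leg{k}{n}\leg{2}{n}$'' implicitly uses $g_j(n)=\leg{j}{n}g_1(n)$, which fails when $(j,n)>1$ and $n$ is not squarefree (and in this paper the modulus is $ln$, not squarefree, with $k$ allowed to share factors with it); the correct mechanism is the substitution $a\mapsto 2a$ inside the Gauss sum, giving $g_{k\overline{2}}(n)=\leg{2}{n}g_k(n)$ for every $k$, after which your conclusion stands unchanged.
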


The Gauss-type sum is calculated exactly with the following (which is Lemma 2.3 of \cite{Sound}).
\begin{mylemma}
\label{lemma:Gk}
If $m$ and $n$ are relatively prime odd integers, then $G_k(mn) = G_k(m) G_k(n)$, and if $p^{\alpha}$ is the largest power of $p$ dividing $k$ (setting $\alpha=\infty$ if $k=0$), then
\begin{equation}
\label{eq:Gauss}
 G_k(p^{\beta}) = 
\begin{cases}
 0, \qquad & \text{if $\beta \leq \alpha$ is odd}, \\
 \phi(p^{\beta}), \qquad & \text{if $\beta \leq \alpha$ is even}, \\
 -p^{\alpha}, \qquad & \text{if $\beta = \alpha + 1$ is even}, \\
 \leg{kp^{-\alpha}}{p} p^{\alpha} \sqrt{p}, \qquad & \text{if $\beta = \alpha +1$ is odd}, \\
 0, \qquad & \text{if $\beta \geq \alpha +2$.}
\end{cases}
\end{equation}
\end{mylemma}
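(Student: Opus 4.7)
The lemma makes two independent assertions: multiplicativity of $G_k$ over coprime odd moduli, and an closed-form evaluation at prime powers. Write $G_k(n) = \epsilon(n)\tau_k(n)$, where $\tau_k(n) = \sum_{a \shortmod{n}} \leg{a}{n}\e{ak}{n}$ and $\epsilon(n) = \frac{1-i}{2} + \leg{-1}{n}\frac{1+i}{2}$.

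For the multiplicativity of $G_k$, I would first establish a twisted multiplicativity of $\tau_k$ via the Chinese Remainder Theorem: parametrize $a \shortmod{mn}$ as $a = mb + nc$ with $b \shortmod{n}$ and $c \shortmod{m}$. The Jacobi symbol factors as $\leg{a}{mn} = \leg{a}{m}\leg{a}{n} = \leg{m}{n}\leg{n}{m}\leg{b}{n}\leg{c}{m}$, and the exponential splits as $\e{bk}{n}\e{ck}{m}$, yielding $\tau_k(mn) = \leg{m}{n}\leg{n}{m}\,\tau_k(m)\tau_k(n)$. By quadratic reciprocity this coefficient equals $(-1)^{(m-1)(n-1)/4}$, and checking the four cases for $m, n \shortmod{4}$ verifies the identity $\epsilon(mn)(-1)^{(m-1)(n-1)/4} = \epsilon(m)\epsilon(n)$. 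This is precisely what is needed and motivates the particular form of $\epsilon$.

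For the prime power evaluation, set $\alpha = v_p(k)$ and argue by cases on $\beta$ relative to $\alpha$ and the parity of $\beta$. When $\beta \leq \alpha$ the exponential is trivial, giving $\phi(p^\beta)$ if $\beta$ is even and $0$ if $\beta$ is odd (from $\sum_{a \shortmod{p}}\leg{a}{p} = 0$). When $\beta \geq \alpha + 2$, I would write $a = b + p^{\beta-\alpha-1}c$ with $b \shortmod{p^{\beta-\alpha-1}}$ and $c \shortmod{p^{\alpha+1}}$: since $\beta - \alpha - 1 \geq 1$ the Jacobi symbol depends only on $b$, while the $c$-sum produces $p^\alpha$ copies of a full additive character sum modulo $p$ with frequency $k/p^\alpha$ coprime to $p$, hence vanishes. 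The two substantive cases are $\beta = \alpha + 1$. When $\beta$ is even, inclusion-exclusion on the condition $(a,p)=1$ reduces the sum to $0 - p^{\beta-1} = -p^\alpha$. When $\beta$ is odd, the decomposition $a = b + pc$ with $b \shortmod{p}$, $c \shortmod{p^\alpha}$ factors the sum as $p^\alpha \leg{kp^{-\alpha}}{p}\, g_p$, where $g_p = \sum_{b \shortmod{p}} \leg{b}{p}\e{b}{p}$ is the classical quadratic Gauss sum.

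The main delicate point is this last case: Gauss's theorem gives $g_p = \sqrt{p}$ or $i\sqrt{p}$ according as $p \equiv 1$ or $3 \shortmod{4}$, so $\tau_k(p^{\alpha+1})$ is genuinely complex precisely when $p \equiv 3 \shortmod{4}$. The prefactor $\epsilon(p)$, which equals $1$ or $-i$ in these two cases, is engineered to remove this phase and produce the uniform clean answer $\leg{kp^{-\alpha}}{p}p^\alpha\sqrt{p}$. I would treat Gauss's evaluation of $g_p$ as a standard input rather than reprove it, so the actual work is the case bookkeeping at the prime power level and the quadratic-reciprocity check that makes the prefactor $\epsilon$ cooperate with multiplicativity.
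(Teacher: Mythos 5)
Your argument is correct and complete: the CRT factorization with the quadratic-reciprocity check on the prefactor $\epsilon$, the case analysis at prime powers, and the use of Gauss's evaluation of $\sum_{b}\leg{b}{p}\e{b}{p}$ to cancel the phase $-i$ when $p\equiv 3\pmod 4$ all check out. The paper itself gives no proof — it quotes this as Lemma 2.3 of Soundararajan's paper — and your proof is essentially the standard argument given there, so there is nothing to flag.
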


\section{Setting up the problem}
We generalize the basic quantity of study \eqref{eq:mainthm}, and consider
\begin{equation}
  M(\alpha,l) = \sumstar_{(d,2)=1} \chi_{8d}(l) L(\thalf + \alpha, \chi_{8d}) \Phi\left(\frac{d}{X}\right)
\end{equation}
for $l$ odd squarefree.  This can be thought of as a ``twisted'' moment, which appears naturally when mollifying and amplifying central values.  Our analysis of $M(0,1)$ leads to general quantities $M(\alpha,l)$, so our method requires to study these more general expressions.

\subsection{Averaging the approximate functional equation}
Using the approximate functional equation, write $M(\alpha,l) = M_1(\alpha,l) + M_2(\alpha,l)$, where
\begin{equation}
 M_1(\alpha,l) = \sumstar_{(d,2) = 1} \Phi\left(\frac{d}{X}\right) \sum_{n} \frac{\chi_{8d}(nl) }{n^{\thalf + \alpha}} V_{\alpha} \left(\frac{n}{d^{1/2}}\right),
\end{equation}
and
\begin{equation}
 M_2(\alpha,l) = \sumstar_{(d,2) = 1} \Phi\left(\frac{d}{X}\right) X_{\alpha} \sum_{n} \frac{\chi_{8d}(nl) }{n^{\thalf-\alpha}} V_{-\alpha} \left(\frac{n}{d^{1/2}}\right).
\end{equation}

We can reduce $M_2$ to a version of $M_1$ but with slightly different parameters, namely
\begin{equation}
 M_2(\alpha,l) = \gamma_{\alpha} X^{-\alpha} \sumstar_{(d,2) = 1} \Phi_{-\alpha}\left(\frac{d}{X}\right) \sum_{n} \frac{\chi_{8d}(nl) }{n^{\thalf-\alpha}} V_{-\alpha} \left(\frac{n}{d^{1/2}}\right),
\end{equation}
where $\Phi_{s}(x) = x^{s} \Phi(x)$ and
\begin{equation}
 \gamma_{\alpha} = \left(\frac{8}{\pi} \right)^{-\alpha} 
\frac{\Gamma\left(\frac{\thalf- \alpha}{2} \right)}{\Gamma\left(\frac{\thalf + \alpha}{2} \right)}.
\end{equation}
For ease of reference, we state this simple development as follows.
\begin{myremark}
\label{remark:M1toM2}
To derive an expression for $M_2$ via a corresponding term from $M_1$ involves swapping $\alpha$ and $-\alpha$, replacing $\Phi(x)$ by $\Phi_{-\alpha}(x) = x^{-\alpha}\Phi(x)$, and multiplying by $\gamma_{\alpha} X^{-\alpha}$ (in that order).  Note that this operation is an involution.
\end{myremark}

As a convention, we shall often not write the dependence on $\alpha$ and $l$ and instead simply write $M_1$, $M_2$, etc.

\subsection{The conjecture}
For convenience, we state
\begin{myconj}
The conjecture of \cite{CFKRS} and \cite{DGH} is
\begin{multline}
 \sumstar_{(d,2)=1} L(\thalf + \alpha, \chi_{8d}) \Phi\left(\frac{d}{X}\right) 
= \frac{X \widetilde{\Phi}(1)}{2 \zeta_2(2)} \zeta_{2}(1 + 2\alpha) B_{\alpha} 
\\
+ \frac{X^{1-\alpha} \widetilde{\Phi}(1-\alpha) \gamma_{\alpha}}{2 \zeta_2(2)} \zeta_{2}(1 - 2\alpha) B_{-\alpha}
+ O(X^{1/2 + \varepsilon}),
\end{multline}
where $\widetilde{\Phi}$ is the Mellin transform of $\Phi$, and
\begin{equation}
 \zeta_2(1 + 2\alpha) B_{\alpha} := \sum_{(n,2)=1} \frac{1}{n^{1 + 2\alpha}} \prod_{p | n} (1 + p^{-1})^{-1}.
\end{equation}
Here $B_{\alpha}$ has an absolutely convergent Euler product for $\alpha$ in a neighborhood of the origin. Furthermore, the estimate holds uniformly for $|\text{Re}(\alpha)| \ll (\log{X})^{-1}$ and $|\text{Im}(\alpha)| \ll (\log{X})^2$, say.
\end{myconj}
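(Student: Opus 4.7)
The plan is to prove the conjecture by analyzing the twisted moment $M(\alpha,l)$---which must be handled in the generality of odd squarefree $l$ because the recursion produces such twists---via the decomposition $M = M_1 + M_2$ from the approximate functional equation. The $M_2$ side reduces to a variant of $M_1$ through the involution of Remark~\ref{remark:M1toM2}, so the main work is on $M_1$.

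The core computation is to (i) remove the primitivity of $d$ by M\"obius, writing $d = a^2 b$ with $(a,2l) = 1$ and $b$ odd, (ii) apply Lemma~\ref{lemma:Poisson} to dualize the inner sum over $b$ into a sum over $k \in \mz$ weighted by $G_k(nl)$ times a Fourier-type transform of the weight, and (iii) evaluate $G_k(nl)$ explicitly via Lemma~\ref{lemma:Gk}. The $k = 0$ contribution is the diagonal: $G_0(m)$ forces $nl$ to be a perfect square, and after Mellin inversion in the definition of $V_\alpha$ the pole of $\zeta_2(1 + 2s + 2\alpha)$ at $s = -\alpha$ produces the first main term $\tfrac{X \widetilde\Phi(1)}{2 \zeta_2(2)} \zeta_2(1 + 2\alpha) B_\alpha$. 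The zeros imposed on $G$ in Remark~\ref{remark:zero} are chosen precisely to kill other residues that would otherwise have to be matched across $M_1$ and $M_2$. Applying the involution of Remark~\ref{remark:M1toM2}, $M_2$ contributes the parallel term $\tfrac{X^{1-\alpha} \widetilde\Phi(1-\alpha) \gamma_\alpha}{2 \zeta_2(2)} \zeta_2(1 - 2\alpha) B_{-\alpha}$.

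For $k \neq 0$ I would split at a threshold $|k| \asymp K_0$ determined by the transition point of rapid decay of $\widehat F$. In the large-$|k|$ regime, rapid decay combined with the Weil-type bound $|G_k(nl)| \ll (nl)^{1/2}(k,nl)^{1/2}$ from Lemma~\ref{lemma:Gk} gives an acceptable $O(X^{1/2 + \varepsilon})$ contribution. The small-$|k|$ regime is where the recursion is used: after separating the squarefree part $k_1$ from $k = k_1 k_2^2$, the Gauss sum factors through $k_1$ and the remaining sum reassembles into an object of the same shape as $M(\alpha,l')$ for an odd squarefree $l'$ built from $l$ and $k_1$, but at a smaller effective scale. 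Feeding the bootstrap hypothesis $M(\alpha,l) = \text{main terms} + O\lp (Xl)^{\vartheta+\varepsilon}\rp$ back into the recursion yields a new error whose exponent satisfies a contraction with fixed point $\tfrac12$, so iterating drives $\vartheta \to \tfrac12$; the initial seed $\vartheta = \tfrac34$ can be taken from Jutila's result.

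The main obstacle I anticipate is controlling the subsidiary main terms produced by the small-$|k|$ residue extraction. Each such term is individually of order $X^{3/4 + \varepsilon}$ (and appears to require the Riemann Hypothesis to improve on an individual basis), which would stall the recursion at $\vartheta = \tfrac34$. The resolution---and the most delicate part of the argument---is to show that these subsidiary terms arising from $M_1$ cancel exactly against their counterparts from $M_2$ under the involution of Remark~\ref{remark:M1toM2}. Verifying this cancellation requires careful tracking of the $X^{\pm \alpha}$ and $\gamma_\alpha$ prefactors, the $\Gamma$-ratios coming from $g_{\pm \alpha}$, and the asymmetry between $\widetilde\Phi(1)$ and $\widetilde\Phi(1-\alpha)$; once this is verified, the recursion closes and produces $O(X^{1/2+\varepsilon})$ uniformly in the prescribed range of $\alpha$.
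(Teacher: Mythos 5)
Your overall architecture (approximate functional equation, M\"obius removal of the squarefree condition, Poisson summation with the Gauss sums $G_k$, a bootstrap whose exponent contracts to $\tfrac12$, and a final cancellation of subsidiary $X^{3/4+\varepsilon}$ terms) matches the paper, but there is a structural gap in where the recursion lives. After writing $d = a^2 b$, you apply Poisson to the $b$-sum for \emph{all} $a$; for $a$ large the $b$-sum has length $X/a^2$ and Poisson is counterproductive (the dual sum is longer than the original), so the error from the nonzero frequencies is not controllable. The paper splits the M\"obius sum at a parameter $Y$: for $a \le Y$ it uses Poisson (giving $M_N$, with the $k\neq 0$ tail bounded by $O(X^{1/4+\varepsilon}Y l^{1/2+\varepsilon})$ after one residue is extracted), while for $a > Y$ it does \emph{not} Poisson-sum at all. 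Instead it substitutes $d \to b^2 d$ to reintroduce a squarefree variable at scale $X/c^2$, recognizes the inner sum as a twisted moment $\sumstar \chi_{8d}(lr)\,\Phi_{s/2}(d/X') L(\thalf+\alpha+s,\chi_{8d})$, and applies the inductive hypothesis \emph{there}. Your proposal instead feeds the bootstrap into the small-$|k|$ dual sum, claiming it ``reassembles into an object of the same shape as $M(\alpha,l')$ at a smaller effective scale''; this is not substantiated and is not what happens: the dual object is a sum over the squarefree dual variable $k_1$ of $L(\thalf+w,\chi_{k_1})$ against weights involving a \emph{ratio} of two $L$-functions, not a twisted first moment over squarefree $d$. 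Without the $Y$-split you have no place to run the recursion and no way to balance the two error terms ($X^{f}/Y^{2f-1}$ against $X^{1/4}Y$, optimized at $Y=X^{1/4}$ to give the contraction $f \mapsto \tfrac{f+1/2}{2}$).

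Relatedly, your proposed cancellation mechanism (``subsidiary terms from $M_1$ cancel against their counterparts from $M_2$ under the involution'') is not the one that works. The identity the paper proves is a four-way recombination: $M_N(k=0) + M_{-N}(k_1=1) + M_{R1} + M_{-R2}$ equals the first main term exactly. This mixes the Poisson-diagonal from $M_1$ with the square-$k$ secondary term from $M_2$ \emph{and} the two recursion main terms from the $a>Y$ tails; the $a\le Y$ and $a>Y$ sums recombine into a complete sum over $a$, and the two resulting contour integrals, which sit on opposite sides of $s=0$ with identical integrands, collapse to the residue at $s=0$ (this is also where the zeros imposed on $G$ in Remark~\ref{remark:zero} are used). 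A purely $M_1$-versus-$M_2$ pairing under the involution cannot produce this, since each of the four pieces individually carries an incomplete $a$-sum and a one-sided contour. Minor point: the paper seeds the iteration with $f_0=1$ (trivial from Jutila's second-moment bound), not $3/4$; this does not affect the limit.
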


One can derive this conjecture from the recipe of \cite{CFKRS} using the orthogonality relation
\begin{equation}
 \sumstar_{(d,2) = 1} \chi_{8d}(m) \Phi \leg{d}{X} \sim \frac{X \widetilde{\Phi}(1)}{2 \zeta_2(2)} \prod_{p|m} (1+ p^{-1})^{-1},
\end{equation}
for $m$ an odd square, and is $o(X)$ otherwise (for fixed $m$).

Actually we need the ``twisted'' moment conjecture (see \cite{HY}, Section 7), which states the following
\begin{myconj} 
\label{conj:twistedconj}
We have for any odd squarefree $l$ that the following holds uniformly for $|\text{Re}(\alpha)| \ll (\log{X})^{-1}$ and $|\text{Im}(\alpha)| \ll (\log{X})^2$
\begin{multline}
\label{eq:Malphal}
M(\alpha,l)
= \frac{X \widetilde{\Phi}(1)}{2 \zeta_2(2)} l^{-1/2 - \alpha} \zeta_{2}(1 + 2\alpha) B_{\alpha}(l) 
\\
+ \frac{X^{1-\alpha} \widetilde{\Phi}(1-\alpha) \gamma_{\alpha}}{2 \zeta_2(2)} l^{-1/2 + \alpha} \zeta_{2}(1 - 2\alpha) B_{-\alpha}(l)  + O((lX)^{1/2 + \varepsilon}),
\end{multline}
where
\begin{equation}
\zeta_2(1 + 2\alpha) B_{\alpha}(l) = \sum_{(n,2)=1} \frac{1}{n^{1 + 2\alpha}} \prod_{p | nl} (1 + p^{-1})^{-1}.
\end{equation}
\end{myconj}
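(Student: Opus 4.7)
The plan is to reduce to the machinery set up for $M(0,1)$ and combine three ingredients: the approximate functional equation (to represent $L(\tfrac12+\alpha,\chi_{8d})$ by two short sums), Poisson summation (to dualize the $d$-average into one with a shorter arithmetic dual length), and a recursive bootstrap (to turn a suboptimal input into the optimal output). By the approximate functional equation and Remark \ref{remark:M1toM2}, it suffices to analyze $M_1(\alpha,l)$; the corresponding estimate for $M_2$ then follows by the stated involution. I would first remove the primitivity constraint by Möbius, writing $d = a^2 b$ with $(a,2l)=1$, so that the outer sum becomes $\sum_a \mu(a) \sum_{(b,2)=1} \leg{b}{\cdot} \Phi(a^2 b / X) V_\alpha(\cdots)$ with the Jacobi symbol in $b$ ranging modulo an appropriate odd factor of $nl$. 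To the inner sum over $b$, Lemma \ref{lemma:Poisson} applies and dualizes $b\to k$, inserting Gauss-type sums $G_k$ evaluated by Lemma \ref{lemma:Gk}.

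After Poisson, I split according to $k=0$ versus $k\neq 0$. For $k=0$, Lemma \ref{lemma:Gk} forces the squarefree part of the modulus to be trivial, reducing the Dirichlet series in $n$ to one supported on squares. Shifting the contour in the $V_\alpha$ integral across $s=0$ and $s=\pm\alpha$ picks up residues that, once combined with the $M_2$ companion via Remark \ref{remark:M1toM2}, produce the two advertised main terms $X\widetilde\Phi(1)\, l^{-1/2-\alpha} \zeta_2(1+2\alpha) B_\alpha(l)$ and $X^{1-\alpha}\widetilde\Phi(1-\alpha)\gamma_\alpha\, l^{-1/2+\alpha}\zeta_2(1-2\alpha) B_{-\alpha}(l)$, up to the normalizing $2\zeta_2(2)$. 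This is the delicate step advertised in the introduction: individual ``subsidiary'' residues arising at $s=\pm\alpha$ are of size $X^{3/4+\varepsilon}$ and appear to need RH to improve, yet they cancel exactly between $M_1$ and $M_2$. The vanishing conditions $G(\pm\alpha)=G(\tfrac12\pm\alpha)=0$ of Remark \ref{remark:zero} are engineered precisely so that this cancellation is visible and not obscured by spurious poles.

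For $k\neq 0$, executing the $n$-sum by Mellin inversion and reassembling the resulting Dirichlet series reveals a structural twin of the original problem: the inner object is again a twisted first moment of the form $M(\alpha',l')$, with a dual length of size about $X^{1/2}$ and a twist $l'$ absorbing the $k$-part. This is where the recursion operates. Assuming \eqref{eq:Malphal} with error $O((lX)^{\theta+\varepsilon})$ for some $\theta\in[1/2,1]$ and substituting that bound into the dual sum yields the same estimate with exponent $\theta' = \tfrac12(\theta+1)$. Starting from the trivial $\theta_0=1$ (from convexity) and iterating $\asymp \log(1/\varepsilon)$ times drives $\theta\to 1/2$, giving \eqref{eq:Malphal}.

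The principal obstacle is the $k=0$ cancellation: one must compute the leading constants of the $X^{3/4+\varepsilon}$-sized subsidiary residues in both $M_1$ and $M_2$ and verify they match identically, since any residual mismatch would only be controllable under RH. A secondary difficulty is maintaining uniform $l$- and $\alpha$-dependence through each iteration, so that the recursion does not accumulate powers of $l$ beyond the final $l^{1/2+\varepsilon}$, and controlling the Möbius truncation over $a$ with comparable cleanliness; this is the analogue, in the present recursive framework, of the squarefree difficulty that Heath-Brown circumvented by positivity in the large sieve context.
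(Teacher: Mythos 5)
Your proposal captures the broad ingredients (approximate functional equation, M\"obius removal of the squarefree condition, Poisson summation, recursion), but the recursion is attached to the wrong piece of the decomposition, and this is not a cosmetic difference. In the paper, after M\"obius inversion the sum over $a$ is split at a parameter $Y$: the range $a\le Y$ ($M_N$) is treated by Poisson summation in $d$, while the range $a>Y$ ($M_R$) is \emph{not} dualized at all --- instead one reintroduces squarefree integers via $d\to b^2d$, recognizes a genuine twisted first moment over squarefree $d$ with effective length $X'=X/c^2\ll X/Y^2$, and applies the inductive hypothesis there, yielding an error $O(X^{f+\varepsilon}Y^{1-2f}l^{1/2+\varepsilon})$. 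Your plan omits the $Y$-split entirely and instead claims that the $k\neq 0$ dual terms after Poisson are ``again a twisted first moment of the form $M(\alpha',l')$.'' They are not: once Poisson is applied the $d$-variable is gone, and the $k\neq 0$ contribution is a double Dirichlet series in $(n,k)$ built from the Gauss sums $G_k(ln)$, with Euler products involving ratios $L(\tfrac12+w,\chi_{k_1})/L(\tfrac12+v+w,\chi_{k_1})$; it is handled by contour shifts and a trivial bound of size $O(X^{1/4+\varepsilon}Yl^{1/2+\varepsilon})$, with the only extracted term coming from the pole at $k_1=1$. There is no self-similar moment there to feed the recursion into, so as written your bootstrap has nothing to iterate on. Relatedly, your iteration map $\theta'=\tfrac12(\theta+1)$ has fixed point $1$, not $1/2$, and starting from $\theta_0=1$ it never moves; the correct map, obtained by balancing $X^{f}Y^{1-2f}$ against $X^{1/4}Y$ at $Y=X^{1/4}$, is $f'=\tfrac12(f+\tfrac12)$, which does converge to $1/2$.

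The description of the main-term cancellation is also not quite right. The $O(X^{3/4+\varepsilon})$ obstructions are not residues at $s=\pm\alpha$ cancelling between $M_1$ and $M_2$ (the zeros $G(\pm\alpha)=G(\tfrac12\pm\alpha)=0$ are chosen precisely so that no such poles are crossed). Rather, each of the four pieces $M_N(k=0)$, $M_{-N}(k_1=1)$, $M_{R1}$, $M_{-R2}$ is an individual contour integral containing a factor $\zeta(2+2\alpha+2s)^{-1}$ that blocks further contour shifts; the point is that after summing the $a$-series the first pair sits on the line $\mathrm{Re}(s)=\varepsilon$ and the second pair on $\mathrm{Re}(s)=-\varepsilon$ with \emph{identical} integrands, so their sum collapses to the single residue at $s=0$, which is the main term. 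You would need to carry out the Dirichlet-series identifications (via $B_{\alpha}(l)$) that make the integrands match before any cancellation can be claimed.
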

We shall prove Conjecture \ref{conj:twistedconj} with the stated error term by use of the following recursive result
\begin{mytheo}
\label{theo:recursive}
Suppose that Conjecture \ref{conj:twistedconj} holds but with an error term of size $l^{1/2 + \varepsilon} X^{f + \varepsilon}$, for some $1/2 \leq f \leq 1$.  Then Conjecture \ref{conj:twistedconj} holds with an error of size $l^{1/2 + \varepsilon} X^{\frac{f+\half}{2} + \varepsilon}$.
\end{mytheo}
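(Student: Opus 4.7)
The plan is to follow the structure of Soundararajan's approach (approximate functional equation plus Poisson summation on the discriminant sum), but with the squarefree condition handled by a truncated M\"obius inversion and the tail controlled by the inductive hypothesis. First, using Proposition~1, decompose $M(\alpha,l) = M_1(\alpha,l) + M_2(\alpha,l)$; by Remark~\ref{remark:M1toM2} it suffices to treat $M_1$. Swap the orders of summation in $M_1$ and expand the squarefree condition via $\mu^2(d) = \sum_{a^2 \mid d} \mu(a)$, writing $d = a^2 b$ with $(a,2) = (b,2) = 1$; note that $\chi_{8a^2 b}(nl) = \chi_{8b}(nl)$ whenever $(a,nl) = 1$, the contribution of $(a,nl) > 1$ being negligible by standard divisor bounds.

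Next, split the $a$-sum at the threshold $Y = X^{1/4}$. For $a \leq Y$, apply Poisson summation (Lemma~\ref{lemma:Poisson}) to the inner $b$-sum for each fixed $n$ and $a$. The zero-frequency term $k = 0$ contributes one piece of the predicted main term. The nonzero frequencies, evaluated via the Gauss-sum formula (Lemma~\ref{lemma:Gk}), yield an off-diagonal expression analyzed as in \cite{Sound}; the subsidiary terms, individually of size $O(X^{3/4 + \varepsilon})$, must be carefully isolated and combined with their counterparts in $M_2$, whereupon they cancel, leaving a remaining small-$a$ contribution of $O(l^{1/2 + \varepsilon} X^{1/2 + \varepsilon})$.

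For $a > Y$, identify the inner double sum over $(n,b)$ as a rescaled twisted moment. Summing over $n$ with the weight $V_{\alpha}(n/\sqrt{a^2 b})$ recovers $L(\thalf + \alpha, \chi_{8 a^2 b})$ (minus its symmetric $M_2$-type partner); extracting local Euler factors at primes dividing $a$ and writing $b = e^2 m$ with $m$ squarefree expresses this in terms of $L(\thalf + \alpha, \chi_{8m})$, so the sum becomes $M(\alpha, l')$ at reduced scale $X/(ae)^2$, for a modified $l'$ depending on $l$ and $ae$. Applying the inductive hypothesis yields a main term which, together with the $k = 0$ contribution from the small-$a$ range, produces the full predicted main term, plus an error of size $(l')^{1/2 + \varepsilon} (X/(ae)^2)^{f + \varepsilon}$ per $a,e$. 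Summing over $e \geq 1$ and $a > Y$ gives $\ll l^{1/2 + \varepsilon} X^{f + \varepsilon} / Y^{2f - 1}$; with $Y = X^{1/4}$, this equals $l^{1/2 + \varepsilon} X^{(f + 1/2)/2 + \varepsilon}$, matching the claimed bound.

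The hardest step is the large-$a$ analysis: identifying the inner sum as a twisted moment $M(\alpha, l')$ requires careful handling of the imprimitivity of $\chi_{8 a^2 b}$ (extracting local Euler factors at primes dividing $a$) and of the non-squarefree $b$ (a second M\"obius inversion to reduce to the squarefree part $m$), all while preserving the $l^{1/2 + \varepsilon}$ dependence. The small-$a$ cancellation of the $X^{3/4 + \varepsilon}$-sized subsidiary terms is the other delicate ingredient; without it, the small-$a$ contribution would dominate and prevent the recursion from pushing below $f = 3/4$.
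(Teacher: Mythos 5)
Your proposal follows essentially the same route as the paper: approximate functional equation, M\"obius inversion on the squarefree condition split at $Y = X^{1/4}$, Poisson summation on the small-$a$ range (with the $k=0$ term and the square-$k$ residue terms combining with the large-$a$ main terms, the $X^{3/4+\varepsilon}$-sized pieces cancelling), and the inductive hypothesis applied to the large-$a$ range rewritten as twisted moments at reduced scale $X/(ae)^2$, with the same error bookkeeping $X^{f+\varepsilon}Y^{1-2f} + X^{1/4+\varepsilon}Y = X^{(f+1/2)/2+\varepsilon}$. The strategy and all key ingredients match the paper's proof.
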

We will obtain the exponent $f_0=1$ as an initial estimate, which leads to the sequence $f_1 = 3/4$, $f_2 = 5/8, \dots,$ where clearly $\lim_{n \rightarrow \infty} f_n = 1/2$, and whence Conjecture \ref{conj:twistedconj} follows.
Specializing to $\alpha =0$, $l=1$ gives Theorem \ref{theo:mainthm}.

In what follows we assume that $|\alpha| \gg (\log{X})^{-1}$, $|\text{Im}(\alpha)| \ll \log^2{X}$, and $|\text{Re}(\alpha)| \ll (\log{X})^{-1}$ in order to claim uniformity in terms of $\alpha$; note that $\alpha$ being close to zero is problematic in two ways, namely because of the pole of $\zeta(1 + 2 \alpha)$, and because of the uniformity of $G(s)$ in terms of $\alpha$.  Once we establish \eqref{eq:Malphal} uniformly for such $\alpha$, we extend the result to $|\alpha| \ll (\log{X})^{-1}$ with the same quoted error term in the following way.  Since $M(\alpha,l)$ and the main term of \eqref{eq:Malphal} are holomorphic for $\alpha$ near $0$, we see that the error term must also be holomorphic in terms of $\alpha$.  Then apply the maximum modulus principle to the error term, with respect to the disk $|\alpha| \ll (\log{X})^{-1}$.

\subsection{A short calculation of $B_{\alpha}(l)$}
In order to recognize certain other expressions in terms of $B_{\alpha}$, and also to observe the absolute and uniform convergence of $B_{\alpha}(l)$ for $\alpha$ in a neighborhood of the origin, we now state
\begin{mylemma}
 We have
\begin{equation}
\label{eq:Balpha2}
 B_{\alpha}(l) = \prod_{p | l} (1 + p^{-1})^{-1}  \prod_{p \nmid 2l} \lp 1 - p^{-2-2\alpha} (1+p^{-1})^{-1} \rp,
\end{equation}
and
\begin{equation}
\label{eq:Balpha}
\frac{1}{\zeta_2(2)} B_{\alpha}(l) =  \frac{\phi(l)}{l} \prod_{p \nmid 2l} (1 - p^{-2} -p^{-2-2\alpha} + p^{-3-2\alpha}).
\end{equation}
\end{mylemma}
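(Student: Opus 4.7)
My plan is to expand the defining sum for $\zeta_2(1+2\alpha) B_\alpha(l)$ as an Euler product and then divide out $\zeta_2(1+2\alpha)$ factor by factor. Because $l$ is odd, the weight $\prod_{p\mid nl}(1+p^{-1})^{-1}$ decomposes multiplicatively as a piece depending only on $l$ times a piece supported on primes of $n$ that avoid $2l$:
\[
\zeta_2(1+2\alpha)B_\alpha(l) = \prod_{p\mid l}(1+p^{-1})^{-1}\sum_{(n,2)=1}\frac{1}{n^{1+2\alpha}}\prod_{\substack{p\mid n \\ p\nmid 2l}}(1+p^{-1})^{-1}.
\]
The remaining sum is multiplicative in $n$, so it factors as an Euler product over odd primes. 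For $p\mid l$ the local factor is the bare geometric series $(1-p^{-1-2\alpha})^{-1}$, while for $p\nmid 2l$ a direct summation gives
\[
1 + (1+p^{-1})^{-1}\sum_{k\geq 1}p^{-k(1+2\alpha)} = \frac{1 + p^{-1} - p^{-2-2\alpha}}{(1+p^{-1})(1-p^{-1-2\alpha})}.
\]

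Next I would divide through by $\zeta_2(1+2\alpha) = \prod_{p\neq 2}(1-p^{-1-2\alpha})^{-1}$. Every $(1-p^{-1-2\alpha})$ denominator cancels, leaving
\[
B_\alpha(l) = \prod_{p\mid l}(1+p^{-1})^{-1}\prod_{p\nmid 2l}\frac{1 + p^{-1} - p^{-2-2\alpha}}{1+p^{-1}},
\]
and rewriting the fraction as $1 - p^{-2-2\alpha}(1+p^{-1})^{-1}$ yields \eqref{eq:Balpha2}.

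For \eqref{eq:Balpha} I would multiply by $\zeta_2(2)^{-1} = \prod_{p\neq 2}(1-p^{-2})$, splitting the product over $p\mid l$ and $p\nmid 2l$. On the $p\mid l$ part, the squarefree hypothesis on $l$ gives
\[
\prod_{p\mid l}(1-p^{-2})(1+p^{-1})^{-1} = \prod_{p\mid l}(1-p^{-1}) = \frac{\phi(l)}{l}.
\]
On the $p\nmid 2l$ part I would expand $(1-p^{-2})\bigl(1 - p^{-2-2\alpha}(1+p^{-1})^{-1}\bigr)$ and use $1-p^{-2} = (1-p^{-1})(1+p^{-1})$ to collapse it to $1 - p^{-2} - p^{-2-2\alpha} + p^{-3-2\alpha}$, which is exactly \eqref{eq:Balpha}.

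There is no genuine obstacle; the statement is a pure algebraic identity and the work above is bookkeeping. The one observation worth recording is that, in the form \eqref{eq:Balpha2}, each local factor for $p\nmid 2l$ equals $1 + O(p^{-2})$ uniformly for $\alpha$ in a neighborhood of the origin, which is what justifies the claim that $B_\alpha(l)$ converges absolutely and uniformly there.
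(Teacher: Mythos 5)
Your proof is correct and follows essentially the same route as the paper: factor out the $p \mid l$ weight, expand the remaining sum as an Euler product with the bare geometric series at $p \mid l$ and the weighted factor at $p \nmid 2l$, cancel $\zeta_2(1+2\alpha)$, and then multiply by $\prod_{p \nmid 2}(1-p^{-2})$ to get the second identity. All the local computations check out, and your closing remark about the $1 + O(p^{-2})$ local factors matches the paper's stated purpose for the lemma.
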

\begin{proof}

We have
\begin{align}
\zeta_2(1 + 2\alpha)  B_{\alpha}(l) &= \sum_{(n,2)=1} \frac{1}{n^{1 + 2\alpha}} \prod_{p | nl} (1 + p^{-1})^{-1}
\\
&= \prod_{p | l} (1 + p^{-1})^{-1} \sum_{(n,2)=1} \frac{1}{n^{1 + 2\alpha}} \prod_{p | n, p \nmid l} (1 + p^{-1})^{-1}
\\
&= \prod_{p | l} (1 + p^{-1})^{-1}  (1-p^{-1-2\alpha})^{-1} \sum_{(n,2l)=1} \frac{1}{n^{1 + 2\alpha}} \prod_{p | n} (1 + p^{-1})^{-1},
\end{align}
which upon performing the sum over $n$ gives
\begin{align}
\zeta_2(1 + 2\alpha)  B_{\alpha}(l) &= \prod_{p | l} (1 + p^{-1})^{-1}  (1-p^{-1-2\alpha})^{-1} \prod_{p \nmid 2l} \lp 1+ \frac{p^{-1-2\alpha}}{1- p^{-1-2\alpha}} (1+p^{-1})^{-1} \rp
\\
&= \zeta_{2}(1 + 2\alpha) \prod_{p | l} (1 + p^{-1})^{-1}  \prod_{p \nmid 2l} \lp 1 - p^{-1-2\alpha} + p^{-1-2\alpha} (1+p^{-1})^{-1} \rp
\\
&= \zeta_{2}(1 + 2\alpha) \prod_{p | l} (1 + p^{-1})^{-1}  \prod_{p \nmid 2l} \lp 1 - p^{-2-2\alpha} (1+p^{-1})^{-1} \rp,
\end{align}
which is \eqref{eq:Balpha2}.
Continuing, we also have
\begin{align}
 \frac{1}{\zeta_2(2)} B_{\alpha}(l) &= \prod_{p | l} (1 + p^{-1})^{-1}  \prod_{p \nmid 2l} \lp 1 - p^{-2-2\alpha} (1+p^{-1})^{-1} \rp \prod_{p \nmid 2} (1-p^{-2})
\\
&= \prod_{p | l} (1- p^{-1}) \prod_{p \nmid 2l} (1-p^{-2})  \lp 1 - p^{-2-2\alpha} (1+p^{-1})^{-1} \rp
\\
&= \frac{\phi(l)}{l} \prod_{p \nmid 2l} (1 - p^{-2} -p^{-2-2\alpha} + p^{-3-2\alpha}).
\qedhere
\end{align}
\end{proof}

\subsection{Removing the squarefree condition}
Our basic strategy is to employ Poisson summation to the sum over $d$.  Of course, one must remove the squarefree condition in some way.  Naturally we shall use M\"{o}bius inversion.  In this way we obtain
\begin{equation}
 M_1 = \sum_{(a,2l) = 1} \mu(a) \sum_{(d,2)=1} \Phi\left(\frac{d a^2}{X}\right) \sum_{(n,2a)=1} \frac{\chi_{8d}(nl) }{n^{\thalf + \alpha}} V_{\alpha} \left(\frac{n}{a d^{1/2}}\right).
\end{equation}
Now we separate the terms with $a \leq Y$ and with $a > Y$ ($Y$ a parameter to be chosen later), writing $M_1 = M_N + M_R$, respectively.  We shall treat these two terms with completely different methods.  We also set the notation $M_2 = M_{-N} + M_{-R}$.

\subsection{Outline of the rest of the paper}
We compute $M_R$ in Section \ref{section:MR}, and $M_N$ in Section \ref{section:MN}.  These are completely different computations, as we use Poisson summation on $M_N$, while we reduce $M_R$ to an expression similar to the original moment $M(\alpha,l)$ to which we extract a main term using Conjecture \ref{conj:twistedconj} but with an error term of size $O(l^{1/2 + \varepsilon} X^{f + \varepsilon})$.  This analysis naturally expresses each of $M_R$ and $M_N$ as the sum of certain contour integrals which are individually difficult (if not impossible) to estimate to our desired degree of accuracy, plus an acceptable error term.  For example, the familiar main term obtained by taking the $k=0$ term after Poisson summation (here $k$ is the ``dual'' variable) is given, after some simplifications, by \eqref{eq:P1}.  A standard contour shift and estimation of the tail of the sum over $a$ easily expresses this term as one of the main terms of Conjecture \ref{conj:twistedconj}, plus an error of size $O(X^{3/4 + \varepsilon}) + O(X/Y)$ (taking $l=1$ for simplicity).  The other terms are similar, although some do not contribute a main term.  Although we cannot estimate each of the various contour integrals with an error better than $O(X^{3/4 + \varepsilon})$, it turns out that there is a lot of simplification that occurs by summing the integrals; we exhibit this pleasant behavior in Section \ref{section:mainterms}.  

\section{Estimating $M_R$}
\label{section:MR}
Our plan in analyzing $M_R$ is to go back to squarefree integers but where the variables are significantly shorter than before (assuming $Y$ is moderately large), an idea that seems to go back to Iwaniec \cite{Iwaniec}.  We shall use a refinement that was recently used in \cite{BY}, whereby one goes a step further and uses a known estimate for $M(\alpha,l)$ (in this case an asymptotic formula) to obtain an improved result.  In this article we will extract a kind of main term that naturally combines with the main term of $M_N$.  In this manner we can vastly improve the size of the error term in this analysis.
\subsection{Reintroducing squarefrees}
To go back to squarefree integers, let $d \rightarrow b^2 d$, which gives
\begin{equation}
M_R = \sumstar_{(d,2)=1}  \sum_{(b,2l)=1} \sum_{\substack{(a,2l) = 1 \\ a > Y}} \mu(a) \Phi\left(\frac{d (ab)^2}{X}\right) \sum_{(n,ab)=1} \frac{\chi_{8d}(nl)}{n^{\thalf + \alpha}} V_{\alpha} \left(\frac{n}{ab \sqrt{d}}\right).
\end{equation}
Letting $c=ab$ be a new variable, we get
\begin{equation}
M_R = \sum_{(c,2l)=1} \lp \sum_{\substack{a |c \\ a > Y}} \mu(a) \rp \sumstar_{(d,2)=1}  \Phi\left(\frac{d c^2}{X}\right) \sum_{(n,c)=1} \frac{\chi_{8d}(nl)}{n^{\thalf + \alpha}} V_{\alpha} \left(\frac{n}{c \sqrt{d}}\right).
\end{equation}
Now use the integral representation of $V_{\alpha}$ (i.e. \eqref{eq:V}) to get
\begin{equation}
M_R = \sum_{(c,2l)=1} \lp \sum_{\substack{a |c \\ a > Y}} \mu(a) \rp \sumstar_{(d,2)=1}  \Phi\left(\frac{d c^2}{X}\right) \sum_{(n,c)=1} \frac{\chi_{8d}(nl)}{n^{\thalf + \alpha}} \frac{1}{2 \pi i} \int_{(\thalf + \varepsilon)} \leg{c \sqrt{d}}{n}^s \frac{G(s)}{s} g_{\alpha}(s) ds.
\end{equation}
Moving the sum over $n$ inside the integral, we get
\begin{multline}
M_R = \sum_{(c,2l)=1} \lp \sum_{\substack{a |c \\ a > Y}} \mu(a) \rp \sumstar_{(d,2)=1} \chi_{8d}(l) \Phi\left(\frac{d c^2}{X}\right) 
\\
\frac{1}{2 \pi i} \int_{(\thalf + \varepsilon)} (c \sqrt{d})^s L(\thalf + \alpha + s, \chi_{8d} \chi_{0,c}) \frac{G(s)}{s} g_{\alpha}(s) ds,
\end{multline}
where $\chi_{0,c}$ is the principal character to modulus $c$.  Now move the line of integration to $\varepsilon$ without crossing any poles in this process (by Remark \ref{remark:zero}).  Then express the Dirichlet $L$-function in terms of its associated primitive one, getting
\begin{multline}
\label{eq:prerecursion}
M_R = \sum_{(c,2l)=1} \lp \sum_{\substack{a |c \\ a > Y}} \mu(a) \rp  \sum_{r | c} \frac{\mu(r)}{r^{1/2 + \alpha}} \sumstar_{(d,2)=1} \chi_{8d}(lr)  \Phi\left(\frac{d c^2}{X}\right) 
\\
\frac{1}{2 \pi i} \int_{(\varepsilon)} \leg{c \sqrt{d}}{r}^s L(\thalf + \alpha + s, \chi_{8d}) \frac{G(s)}{s} g_{\alpha}(s) ds,
\end{multline}
where $\varepsilon \asymp (\log{X})^{-1}$.

\subsection{Using the recursion}
Observe that the sum over $d$ takes the form
\begin{equation}
 \sumstar_{(d,2)=1} \chi_{8d}(lr) \Phi\left(\frac{d c^2}{X}\right) (c \sqrt{d})^s L(\thalf + \alpha + s, \chi_{8d}) = X^{\frac{s}{2}} \sumstar_{(d,2)=1} \chi_{8d}(lr) \Phi_{s/2}\left(\frac{d}{X'}\right) L(\thalf + \alpha + s, \chi_{8d}),
\end{equation}
where recall $\Phi_{s}(x) = x^{s} \Phi(x)$ and $X' = X/c^2$.  At this point we apply our inductive hypothesis, namely that
\begin{equation}
 \sumstar_{(d,2)=1} \chi_{8d}(lr) \Phi_{s/2} \left(\frac{d}{X'}\right) L(\thalf + \alpha + s, \chi_{8d}) = M.T. + O((lr)^{1/2 + \varepsilon} {X'}^{f + \varepsilon}),
\end{equation}
where $f \geq \half$ is some constant, and $\text{Im}(s) \ll \log^2{X}$.  Note that due to the exponential decay of $g_{\alpha}(s)$ as $|\text{Im}(s)| \rightarrow \infty$, we may truncate the integral appearing in \eqref{eq:prerecursion} for $\text{Im}(s) \ll \log^2{X}$ at no cost.

As the basis step one can take $f=1$ due to known estimates for the second moment of this family due to Jutila \cite{Jutila} (with only slightly more work the treatment in this article can also establish this initial estimate for the first moment).  Here the $M.T.$ is given by the main term appearing in Conjecture \ref{conj:twistedconj} (note that to establish the intial case, using $f=1$, no knowledge of $M.T.$ is requiried other than $M.T. \ll X^{1 + \varepsilon}$).  To be completely explicit, we have
\begin{multline}
M.T. = \frac{X}{c^2} \frac{\widetilde{\Phi_{s/2}}(1)}{2 \zeta_2(2)} (lr)^{-1/2-\alpha-s} \zeta_2(1 + 2\alpha +2s) B_{\alpha +s}(lr)
\\
+  \leg{X}{c^2}^{1-\alpha-s} \frac{\widetilde{\Phi_{s/2}}(1-\alpha-s)}{2 \zeta_2(2)} \gamma_{\alpha+s} (lr)^{-1/2+\alpha+s} \zeta_2(1 - 2\alpha -2s) B_{-\alpha -s}(lr).
\end{multline}
Note $\widetilde{\Phi_{s/2}}(1) = \widetilde{\Phi}(1+\tfrac{s}{2})$ and $\widetilde{\Phi_{s/2}}(1-\alpha-s) = \widetilde{\Phi}(1-\alpha-\tfrac{s}{2})$

Inserting this calculation into the computation for $M_R$ gives that $M_R = M_{R1} + M_{R2} + E.T.$, say, where we compute (using the assumption $f \geq 1/2$)
\begin{equation}
 E.T. \ll X^{\varepsilon} \sum_{(c,2l)=1} \lp \sum_{\substack{a |c \\ a > Y}} 1 \rp  \sum_{r | c} \frac{1}{r^{1/2}} (lr)^{1/2 + \varepsilon} \leg{X}{c^2}^{f + \varepsilon} \ll \frac{X^{f + \varepsilon}}{Y^{2f - 1}} l^{1/2 + \varepsilon}.
\end{equation}

We compute the main term $M_{R1}$ as follows
\begin{multline}
\label{eq:MR1}
M_{R1} = \frac{X}{2 \zeta_2(2) l^{1/2 + \alpha}} \sum_{(c,2l)=1} c^{-2} \lp \sum_{\substack{a |c \\ a > Y}} \mu(a) \rp  \sum_{r | c} \frac{\mu(r)}{r^{1 + 2\alpha}}
\\
\frac{1}{2 \pi i} \int_{(\varepsilon)} X^{\frac{s}{2}} r^{-2s} l^{-s} \frac{G(s)}{s} g_{\alpha}(s)   \widetilde{\Phi}(1 +\tfrac{s}{2})  \zeta_2(1 + 2\alpha + 2s) B_{\alpha+s}(lr) ds.
\end{multline}
Here we freely extended the integration back to $|\text{Im}(s)| \gg \log^2{X}$, again at no cost.

Similarly, we have
\begin{multline}
M_{R2} = \frac{X}{2\zeta_2(2) l^{1/2 - \alpha}} \sum_{(c,2l)=1} c^{-2} \lp \sum_{\substack{a |c \\ a > Y}} \mu(a) \rp  \sum_{r | c} \frac{\mu(r)}{r^{}} 
\\
\frac{1}{2 \pi i} \int_{(\varepsilon)}  X^{\frac{s}{2}}  l^{s} \frac{G(s)}{s} g_{\alpha}(s) \widetilde{\Phi}(1 - \alpha - \tfrac{s}{2}) \zeta_2(1-2\alpha-2s) B_{-\alpha-s}(lr) \leg{X}{c^2}^{-\alpha-s} \gamma_{\alpha+s}  ds.
\end{multline}
That is,
\begin{multline}
\label{eq:MR2}
M_{R2} = \frac{X^{1-\alpha}}{2\zeta_2(2) l^{1/2 - \alpha}} \sum_{(c,2l)=1} c^{-2} \lp \sum_{\substack{a |c \\ a > Y}} \mu(a) \rp  \sum_{r | c} \frac{\mu(r)}{r^{}} 
\\
\frac{1}{2 \pi i} \int_{(\varepsilon)}  X^{-\frac{s}{2}}  l^{s} c^{2\alpha +2s} \frac{G(s)}{s} g_{\alpha}(s) \widetilde{\Phi}(1 - \alpha - \tfrac{s}{2}) \zeta_2(1-2\alpha-2s) B_{-\alpha-s}(lr) \gamma_{\alpha+s}  ds.
\end{multline}

\subsection{Simplifying the Dirichlet series}
At this point we shall simplify the Dirichlet series appearing in the above expressions for $M_{R1}$ and $M_{R2}$.  We need to compute
\begin{equation}
C(w,\alpha+s) := \sum_{(c,2l)=1} \frac{1}{c^{2+w-\alpha-s}} \sum_{\substack{a | c \\ a > Y}} \mu(a) \sum_{r | c} \frac{\mu(r)}{r^{1 + \alpha + s + w}} \zeta_2(1 + 2w) B_w(lr),
\end{equation}
where $w = \pm (\alpha + s)$.
Now we compute
\begin{align}
C(w,\alpha+s) &= \sum_{(c,2l)=1} \frac{1}{c^{2+w-\alpha-s}} \sum_{\substack{a | c \\ a > Y}} \mu(a) \sum_{r | c} \frac{\mu(r)}{r^{1 + \alpha + s + w}} \sum_{(n,2)=1} \frac{1}{n^{1 + 2w}} \prod_{p | lrn} (1+p^{-1})^{-1}
\\
&= \sum_{\substack{a | c \\ a > Y}} \mu(a) \sum_{r} \frac{\mu(r)}{r^{1 + \alpha + s + w}} \sum_{\substack{(c,2l)=1 \\ c \equiv 0 \shortmod{a} \\ c \equiv 0 \shortmod{r}}} \frac{1}{c^{2+w-\alpha-s}}   \sum_{(n,2)=1} \frac{1}{n^{1 + 2w}} \prod_{p | lrn} (1+p^{-1})^{-1},
\end{align}
which simplifies to
\begin{multline}
C(w,\alpha+s) = \zeta_{2l}(2+w-\alpha-s) \prod_{p | l} (1+p^{-1})^{-1} \sum_{\substack{(a,2l)=1\\ a > Y}} \frac{\mu(a)}{a^{2+w-\alpha-s}} 
\\
\sum_{(r,2l)=1} \frac{\mu(r)}{r^{1 + \alpha + s + w}} \leg{(a,r)}{r}^{2 +w-\alpha-s}
\sum_{(n,2)=1} \frac{1}{n^{1 + 2w}} \prod_{p |nr, p \nmid l} (1+p^{-1})^{-1}.
\end{multline}
Thus we get
\begin{multline}
C(w,\alpha+s) = \zeta_{2l}(2+w-\alpha-s) \prod_{p | l} (1+p^{-1})^{-1} (1-p^{-1-2w})^{-1} \sum_{\substack{(a,2l)=1\\ a > Y}} \frac{\mu(a)}{a^{2+w-\alpha-s}} 
\\
\sum_{(r,2l)=1} \frac{\mu(r)}{r^{1 + \alpha + s + w}}  \leg{(a,r)}{r}^{2 +w-\alpha-s}
\sum_{(n,2l)=1} \frac{1}{n^{1 + 2w}} \prod_{p |nr, p \nmid l} (1+p^{-1})^{-1}.
\end{multline}
Now we compute using joint multiplicativity that
\begin{multline}
\sum_{(r,2l)=1} \sum_{(n,2l)=1}  \frac{\mu(r)}{r^{1 + \alpha + s + w}} \leg{(a,r)}{r}^{2 +w-\alpha-s} \frac{1}{n^{1 + 2w}} \prod_{p |nr} (1+p^{-1})^{-1}
\\
=
\prod_{p \nmid 2l} \sum_{0 \leq r \leq 1} \sum_{0 \leq j < \infty} \frac{(-1)^r}{p^{r(1+\alpha+s+w)}}  \leg{(a,p^r)}{p^r}^{2 +w-\alpha-s} \frac{1}{p^{j(1 + 2 w)}} \left[ (1 + p^{-1})^{-1} \right]_{j + r > 0},
\end{multline}
where here $[f(x)]_* = f(x)$ if $*$ is true, and $=1$ otherwise.  It simplifies to
\begin{equation}
\prod_{p \nmid 2l} \left( \sum_{0 \leq j < \infty} \frac{\left[ (1 + p^{-1})^{-1} \right]_{j > 0}}{p^{j(1 + 2 w)}}   - p^{-1-\alpha-s-w}  \leg{(a,p)}{p}^{2 +w-\alpha-s} \sum_{0 \leq j < \infty} \frac{ (1 + p^{-1})^{-1}}{p^{j(1 + 2 w)}} \right),
\end{equation}
which in turn is
\begin{equation}
\prod_{p \nmid 2l} \left( 1 + \frac{p^{-1-2w}}{1-p^{-1-2w}} (1+p^{-1})^{-1} -  \frac{p^{-1-\alpha-s-w}}{1-p^{-1-2w}}  \leg{(a,p)}{p}^{2 +w-\alpha-s} (1 + p^{-1})^{-1}  \right).
\end{equation}
We can simplify this as
\begin{equation}
\prod_{p \nmid 2l} (1-p^{-1-2w})^{-1} \left( 1 -p ^{-1-2w}+ \frac{p^{-1-2w}}{1+p^{-1}} -  \frac{p^{-1-\alpha-s-w}}{1 + p^{-1}} \leg{(a,p)}{p}^{2 +w-\alpha-s}   \right).
\end{equation}

Thus we have
\begin{multline}
C(w,\alpha +s) = \zeta_{2l}(2+w-\alpha-s) \zeta_2(1 + 2w) \prod_{p | l} (1+p^{-1})^{-1} \sum_{\substack{(a,2l)=1\\ a > Y}} \frac{\mu(a)}{a^{2+w-\alpha-s}}
\\
\prod_{p \nmid 2l} \left( 1 -p ^{-1-2w}+ p^{-1-2w} (1+p^{-1})^{-1} -  p^{-1-\alpha-s-w} \leg{(a,p)}{p}^{2 +w-\alpha-s} (1 + p^{-1})^{-1}  \right).
\end{multline}

Simplifying a little bit, we get
\begin{multline}
C(\alpha + s,\alpha +s) = \zeta_{2l}(2) \zeta_2(1 + 2\alpha + 2s) \prod_{p | l} (1+p^{-1})^{-1} \sum_{\substack{(a,2l)=1\\ a > Y}} \frac{\mu(a)}{a^2}
\\
\prod_{p \nmid 2l} \left( 1 -p ^{-1-2\alpha -2s}+ p^{-1-2\alpha -2s} (1+p^{-1})^{-1} -  p^{-1-2\alpha-2s} \frac{(a,p)^2}{p^2} (1 + p^{-1})^{-1}  \right).
\end{multline}
Note
\begin{multline}
1 -p ^{-1-2\alpha -2s}+ p^{-1-2\alpha -2s} (1+p^{-1})^{-1} -  p^{-1-2\alpha-2s} \frac{(a,p)^2}{p^2} (1 + p^{-1})^{-1}
\\
=
\begin{cases}
1 - p^{-1-2\alpha-2s}, \qquad \text{ if } p | a, \\
1 - p^{-2-2\alpha-2s}, \qquad \text{ if } p \nmid a.
\end{cases}
\end{multline}
Furthermore note
\begin{equation}
\zeta_{2l}(2) \prod_{p |l} (1 + p^{-1})^{-1} = \zeta_2(2) \frac{\phi(l)}{l}.
\end{equation}
Thus we have
\begin{equation}
C(\alpha+s,\alpha+s) = \zeta_2(2) \frac{\phi(l)}{l} \sum_{\substack{(a,2l)=1\\ a > Y}} \frac{\mu(a)}{a^2} \frac{\zeta_{2a}(1 + 2\alpha + 2s)}{\zeta_{2al}(2 + 2\alpha + 2s)},
\end{equation}
and hence
\begin{equation}
\label{eq:MR1b}
M_{R1} = \frac{X \phi(l)/l}{2 l^{1/2 + \alpha}}  \sum_{\substack{(a,2l)=1\\ a > Y}} \frac{\mu(a)}{a^2}
\frac{1}{2 \pi i} \int_{(\varepsilon)} X^{\frac{s}{2}} l^{-s} \frac{G(s)}{s} g_{\alpha}(s)   \widetilde{\Phi}(1 +\tfrac{s}{2})  \frac{\zeta_{2a}(1 + 2\alpha + 2s)}{\zeta_{2al}(2 + 2\alpha + 2s)}ds.
\end{equation}

Similarly, we require $C(-\alpha-s, \alpha + s)$, which is
\begin{multline}
C(-\alpha-s,\alpha +s) = \zeta_{2l}(2-2\alpha-2s) \zeta_2(1 -2\alpha-2s) \prod_{p | l} (1+p^{-1})^{-1} \sum_{\substack{(a,2l)=1\\ a > Y}} \frac{\mu(a)}{a^{2-2\alpha-2s}}
\\
\prod_{p \nmid 2l} \left( 1 -p ^{-1+2\alpha +2s}+ p^{-1+2\alpha+2s} (1+p^{-1})^{-1} -  p^{-1} \leg{(a,p)}{p}^{2 -2\alpha-2s} (1 + p^{-1})^{-1}  \right).
\end{multline}
For $p \nmid a$ we get
\begin{multline}
 1 -p ^{-1+2\alpha +2s}+ p^{-1+2\alpha+2s} (1+p^{-1})^{-1} -  p^{-1} \leg{(a,p)}{p}^{2 -2\alpha-2s} (1 + p^{-1})^{-1}
\\
= 1-p^{-2+2\alpha+2s}.
\end{multline}
For $p | a$ it is
\begin{multline}
 1 -p ^{-1+2\alpha +2s}+ p^{-1+2\alpha+2s} (1+p^{-1})^{-1} -  p^{-1}  (1 + p^{-1})^{-1} 
\\
= (1+p^{-1})^{-1}(1-p^{-2+2\alpha+2s}).
\end{multline}
Thus we have
\begin{multline}
M_{R2} = \frac{X^{1-\alpha}}{2\zeta_2(2) l^{1/2 - \alpha}} \prod_{p|l} (1+p^{-1})^{-1} \sum_{\substack{(a,2l)=1 \\ a > Y}} \frac{\mu(a)}{a^2} 
\frac{1}{2 \pi i} \int_{(\varepsilon)}  X^{-\frac{s}{2}}  l^{s} a^{2\alpha +2s} \frac{G(s)}{s} g_{\alpha}(s) \widetilde{\Phi}(1 - \alpha - \tfrac{s}{2}) 
\\
\zeta_2(1-2\alpha-2s) \zeta_{2l}(2-2\alpha-2s) \gamma_{\alpha+s}  
\prod_{p \nmid 2al} (1-p^{-2+2\alpha+2s}) \prod_{p | a} (1+p^{-1})^{-1} (1-p^{-2+2\alpha+2s})
ds,
\end{multline}
which simplifies to
\begin{multline}
\label{eq:MR2b}
M_{R2} = \frac{X^{1-\alpha}}{2\zeta_2(2) l^{1/2 - \alpha}} \prod_{p|l} (1+p^{-1})^{-1} \sum_{\substack{(a,2l)=1 \\ a > Y}} \frac{\mu(a)}{a^2} \prod_{p | a} (1+p^{-1})^{-1}
\\
\frac{1}{2 \pi i} \int_{(\varepsilon)}  X^{-\frac{s}{2}}  l^{s} a^{2\alpha +2s} \frac{G(s)}{s} g_{\alpha}(s) \widetilde{\Phi}(1 - \alpha - \tfrac{s}{2}) 
\zeta_2(1-2\alpha-2s)  \gamma_{\alpha+s}   
ds.
\end{multline}
In summary, we have shown
\begin{myprop}
\label{prop:MR}
Suppose that Conjecture \ref{conj:twistedconj} holds but with an error of size $O(l^{1/2 + \varepsilon} X^{f + \varepsilon})$.  Then
\begin{equation}
 M_R = M_{R1} + M_{R2} +  O\lp\frac{X^{f + \varepsilon}}{Y^{2f-1}} l^{1/2 + \varepsilon}\rp,
\end{equation}
where $M_{R1}$ and $M_{R2}$ are given by \eqref{eq:MR1b} and \eqref{eq:MR2b}.  
\end{myprop}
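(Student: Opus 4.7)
The plan is to reduce $M_R$ to an expression involving shorter primitive quadratic first moments, to which the inductive hypothesis in Conjecture \ref{conj:twistedconj} can be applied. First, the substitution $d \to b^2 d$ returns us to squarefree $d$, and combining $c = ab$ collects $a$ and $b$ into a single divisor variable. Using the Mellin--Barnes formula \eqref{eq:V} for $V_\alpha$ and interchanging summations, the inner sum in $n$ becomes an integral of $L(\thalf + \alpha + s, \chi_{8d}\chi_{0,c})$ along $\text{Re}(s) = \thalf + \varepsilon$; the zeros of $G$ supplied by Remark \ref{remark:zero} allow the contour to be shifted to $\text{Re}(s) = \varepsilon$ without encountering poles. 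Replacing the imprimitive $L$-function by its primitive partner introduces a M\"obius-weighted sum over $r \mid c$ with character $\chi_{8d}(lr)$, setting up the expression \eqref{eq:prerecursion}.

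Next, the inner sum over squarefree $d$, supported on $d \asymp X/c^2$, is, up to replacing $\Phi$ by $\Phi_{s/2}$, exactly an instance of $M(\alpha + s, lr)$ at the shorter length $X' = X/c^2$. Applying the assumed asymptotic splits $M_R$ into three pieces: main-term contributions $M_{R1}$ and $M_{R2}$ arising from the two main terms of \eqref{eq:Malphal}, and an error term. Summing the error bound $O((lr)^{1/2+\varepsilon}(X/c^2)^{f+\varepsilon})$ over $r \mid c$ and $a \mid c$ with $a > Y$ (which forces $c > Y$), and using $f \geq \thalf$ so that $\sum_{c > Y} c^{-2f + \varepsilon} \ll Y^{1-2f+\varepsilon}$, produces the claimed total error $O(l^{1/2 + \varepsilon} X^{f + \varepsilon}/Y^{2f-1})$. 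Exponential decay of $g_\alpha(s)$ permits truncation of the $s$-contour at $|\text{Im}(s)| \ll \log^2 X$ throughout, at negligible cost.

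It remains to put $M_{R1}$ and $M_{R2}$ into the compact forms \eqref{eq:MR1b} and \eqref{eq:MR2b}. This requires evaluating the Dirichlet series
\begin{equation*}
C(w,\alpha+s) = \sum_{(c,2l)=1} \frac{1}{c^{2+w-\alpha-s}} \sum_{\substack{a \mid c \\ a > Y}} \mu(a) \sum_{r \mid c} \frac{\mu(r)}{r^{1+\alpha+s+w}} \zeta_2(1+2w) B_w(lr)
\end{equation*}
at $w = \pm(\alpha + s)$. Expanding $B_w(lr)$ via its defining Dirichlet series and interchanging orders so that $a$, $r$, $n$ are free while $c$ runs over multiples of $\mathrm{lcm}(a,r)$, the $c$-sum contributes a factor of $\zeta_{2l}(2+w-\alpha-s)$, and the remainder factors as an Euler product over primes $p \nmid 2l$ whose local factor depends only on whether $p \mid a$. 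The main obstacle is precisely this prime-by-prime bookkeeping, since one must simultaneously juggle $\mu(a)$, $\mu(r)$, the GCD factor $\bigl((a,p)/p\bigr)^{2+w-\alpha-s}$, and the weights $(1+p^{-1})^{-1}$; however, $\mu$ forces each local sum to finitely many terms, and at $w = \alpha+s$ the local factor collapses cleanly to $1-p^{-1-2\alpha-2s}$ or $1-p^{-2-2\alpha-2s}$ according as $p \mid a$ or $p \nmid a$, reassembling into $\zeta_{2a}(1+2\alpha+2s)/\zeta_{2al}(2+2\alpha+2s)$. The case $w = -\alpha-s$ is handled analogously, yielding the stated formulas.
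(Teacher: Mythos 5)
Your proposal is correct and follows essentially the same route as the paper: the substitution $d \to b^2 d$ with $c = ab$, the Mellin--Barnes contour shift past the zeros of $G$, reduction to the primitive $L$-function, application of the inductive hypothesis at length $X/c^2$ with the error summed over $c > Y$ using $f \geq \thalf$, and the Euler-product evaluation of $C(w,\alpha+s)$ splitting on whether $p \mid a$. No gaps.
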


\subsection{Computing $M_{-R}$}
We can get $M_{-R}$ from Proposition \ref{prop:MR} using Remark \ref{remark:M1toM2}.  Here we compute $M_{-R2}$.
We get
\begin{multline}
M_{-R2} = \frac{X^{}}{2\zeta_2(2) l^{1/2 + \alpha}} \prod_{p|l} (1+p^{-1})^{-1} \sum_{\substack{(a,2l)=1 \\ a > Y}} \frac{\mu(a)}{a^2} \prod_{p | a} (1+p^{-1})^{-1}
\\
\frac{1}{2 \pi i} \int_{(\varepsilon)}  X^{-\frac{s}{2}}  l^{s} a^{-2\alpha +2s} \frac{G(s)}{s} g_{-\alpha}(s) \widetilde{\Phi}(1 - \tfrac{s}{2}) 
\zeta_2(1+2\alpha-2s)  \gamma_{-\alpha+s} \gamma_{\alpha}
ds.
\end{multline}
Now apply the change of variables $s \rightarrow -s$ to get
\begin{multline}
M_{-R2} = -\frac{X}{2\zeta_2(2) l^{1/2 + \alpha}} \prod_{p|l} (1+p^{-1})^{-1} \sum_{\substack{(a,2l)=1 \\ a > Y}} \frac{\mu(a)}{a^2} \prod_{p | a} (1+p^{-1})^{-1}
\\
\frac{1}{2 \pi i} \int_{(-\varepsilon)}  X^{\frac{s}{2}}  l^{-s} a^{-2\alpha -2s} \frac{G(s)}{s} g_{-\alpha}(-s) \widetilde{\Phi}(1 + \tfrac{s}{2}) 
\zeta_2(1+2\alpha+2s)  \gamma_{-\alpha-s} \gamma_{\alpha}
ds.
\end{multline}
Next notice that
\begin{equation}
 g_{-\alpha}(-s)\gamma_{-\alpha-s} \gamma_{\alpha} = g_{\alpha}(s),
\end{equation}
so we get
\begin{multline}
\label{eq:M-R2b}
M_{-R2} = -\frac{X}{2\zeta_2(2) l^{1/2}} \prod_{p|l} (1+p^{-1})^{-1} \sum_{\substack{(a,2l)=1 \\ a > Y}} \frac{\mu(a)}{a^2} \prod_{p | a} (1+p^{-1})^{-1}
\\
\frac{1}{2 \pi i} \int_{(-\varepsilon)}  X^{\frac{s}{2}}  l^{-\alpha-s} a^{-2\alpha -2s} \frac{G(s)}{s} g_{\alpha}(s) \widetilde{\Phi}(1 + \tfrac{s}{2}) 
\zeta_2(1+2\alpha+2s) 
ds.
\end{multline}

\section{Computing $M_N$}
\label{section:MN}
Recall
\begin{equation}
M_N =  \sum_{\substack{(a,2l) = 1 \\ a \leq Y}} \mu(a)  \sum_{(n,2a)=1} \frac{\leg{8}{ln} }{n^{\thalf + \alpha}} \sum_{(d,2)=1} \leg{d}{ln} \Phi\left(\frac{d a^2}{X}\right) V_{\alpha} \left(\frac{n}{a \sqrt{d}}\right).
\end{equation}
\subsection{Application of Poisson summation}
The Poisson summation formula (Proposition \ref{lemma:Poisson}) shows
\begin{multline}
 \sum_{(d,2)=1} \leg{d}{ln} \Phi\left(\frac{d a^2}{X}\right) V_{\alpha} \left(\frac{n}{a \sqrt{d}}\right) 
\\
= \frac{1}{2ln} \leg{2}{ln} \sum_{k \in \mz} (-1)^k G_k(ln) \intR (C+S)\leg{2 \pi kx}{2ln} \Phi\leg{xa^2}{X} V_{\alpha}\leg{n}{ a \sqrt{x}} dx.
\end{multline}
Here $(C+S)(t)$ means $\cos(t) + \sin(t)$.  Thus we have
\begin{multline}
\label{eq:MNPoisson}
 M_N = \frac{X}{2} \sum_{\substack{(a,2l) = 1 \\ a \leq Y}} \frac{\mu(a)}{a^2}  \sum_{(n,2a)=1} \frac{1}{n^{\thalf + \alpha}}  \sum_{k \in \mz} (-1)^k \frac{G_k(ln)}{ln} 
\\
\intR (C+S)\leg{2 \pi kxX}{2lna^2} \Phi(x) V_{\alpha}\leg{n}{\sqrt{xX}} dx.
\end{multline}

Now write $M_N = M_N(k=0) + M_N(k \neq 0)$, where naturally $M_N(k=0)$ corresponds to the term with $k=0$.  

\subsection{Computation of $M_N(k=0)$}
By Lemma \ref{lemma:Gk} we have $G_0(ln) = \phi(ln)$ if $ln = \square$ (i.e. $n = l \square$, since $l$ is odd and squarefree), and $0$ otherwise.  Thus we get
\begin{equation}
 M_N(k=0) = \frac{X}{2} l^{-1/2 - \alpha} \sum_{\substack{(a,2l) = 1 \\ a \leq Y}} \frac{\mu(a)}{a^2}  \sum_{(n,2a)=1} \frac{1}{n^{1 + 2\alpha}} \frac{\phi(ln)}{ln} \int_0^{\infty} \Phi(x) V_{\alpha}\leg{ln^2}{\sqrt{xX}} dx.
\end{equation}
Now we compute the relevant Dirichlet series as follows
\begin{align}
\sum_{(n,2a)=1} \frac{1}{n^{1 + 2\alpha + 2s}} \frac{\phi(ln)}{ln} 
 &=
\frac{\phi(l)}{l} \prod_{p \nmid 2al} \lp 1 + (1-p^{-1}) \frac{p^{-1-2\alpha-2s}}{1-p^{-1-2\alpha-2s}} \rp 
\prod_{p \nmid 2a, p | l} \lp1 - p^{-1-2\alpha-2s}\rp^{-1}
\\
 &= \frac{\phi(l)}{l}   \prod_{p \nmid 2al} \frac{(1 - p^{-2 -2\alpha - 2s})}{(1 - p^{-1 - 2\alpha -2s})} \prod_{p \nmid 2a, p | l} \lp1 - p^{-1-2\alpha-2s}\rp^{-1},
\end{align}
which simplifies to
\begin{equation}
\frac{\phi(l)}{l} \zeta_{2a}(1 + 2\alpha + 2s) \prod_{p \nmid 2al} (1 - p^{-2 -2\alpha - 2s})
= \frac{\phi(l)}{l} \frac{\zeta_{2a}(1 + 2\alpha + 2s)}{\zeta_{2al}(2 + 2\alpha + 2s)}.
\end{equation}
Using the integral representation of $V$ (i.e. \eqref{eq:V}), and the Mellin transform of $\Phi$, we get
\begin{multline}
\label{eq:P1}
  M_N(k=0) = \frac{X \phi(l)/l}{2 l^{1/2 + \alpha}} \sum_{\substack{(a,2l) = 1 \\ a \leq Y}} \frac{\mu(a)}{a^2}    
\\
\frac{1}{2\pi i} \int_{(\varepsilon)} \widetilde{\Phi}(1 + \tfrac{s}{2})\frac{G(s)}{s} g_{\alpha}(s) X^{\frac{s}{2}} l^{-s} \frac{\zeta_{2a}(1 + 2\alpha + 2s)}{\zeta_{2al}(2 + 2\alpha + 2s)} ds.
\end{multline}

\subsection{Extracting a secondary term from $M_N(k \neq 0)$}
It turns out, but is not at all obvious, that $M_N(k = 0)$ and $M_{-N}(k \neq 0)$ combine naturally.  The reason for this simplification has to do with the sum over $k$'s that are squares.  In his work on the mollified second moment (and the third moment) of this family, Soundararajan extracted certain main terms from such square $k$'s \cite{Sound}.  The situation here is somewhat different, in that both $M_{N}(k=0)$ and the sum over square $k$'s seem to be individually intractible beyond an error term of $O(X^{3/4 + \varepsilon})$, at least with our present knowledge of the zero-free region of the Riemann zeta function (here by intractible we simply mean that the usual procedure of moving contours of integration leads naturally to an error of $O(X^{3/4 + \varepsilon})$, and the presence of a factor $\zeta^{-1}$ prevents moving the contour any further).

Recall 
\begin{multline}
 M_N(k \neq 0) = \frac{X}{2} \sum_{\substack{(a,2l) = 1 \\ a \leq Y}} \frac{\mu(a)}{a^2}  \sum_{(n,2a)=1} \frac{1}{n^{\thalf + \alpha}}  \sum_{k \neq 0} (-1)^k \frac{G_k(ln)}{ln} 
\\
\intR (C+S)\leg{2 \pi kxX}{2lna^2} \Phi(x) V_{\alpha}\leg{n}{\sqrt{xX}} dx.
\end{multline}

\subsection{Manipulations with the test functions}
In this section we replace the previous Fourier-type integral transform with a Mellin-type integral.  By using the Mellin transforms of $\Phi$ and $V_{\alpha}$, we get
\begin{multline}
\label{eq:fourierintegral}
\intR  (C+S) \left(\frac{2 \pi x kX}{2lna^2} \right)  \Phi(x) V_{\alpha} \left(\frac{n}{(xX)^{1/2}}\right) dx 
\\
= \int_0^{\infty} (C+S)\left(\frac{2 \pi x kX}{2lna^2} \right)  \leg{1}{2\pi i}^2 \int_{(c_u)} \int_{(c_s)} \widetilde{\Phi}(u) x^{-u}  \frac{G(s)}{s} g_{\alpha}(s) n^{-s} (xX)^{\frac{s}{2}} ds du dx,
\end{multline}
which after a change of variables becomes
\begin{equation}
\int_0^{\infty} (C+S)\left( \text{sgn}(k) x \right)  \leg{1}{2\pi i}^2 \int_{(c_u)} \int_{(c_s)} \widetilde{\Phi}(1+u) \frac{X^{u}}{n^s} \leg{lna^2 x}{\pi |k| }^{-u + \tfrac{s}{2}}  \frac{G(s)}{s} g_{\alpha}(s)  ds du \frac{dx}{x}.
\end{equation}

From \cite{GR}, 17.43.3, 17.43.4, we have (where $CS$ stands for $\cos$ or $\sin$), 
\begin{equation}
 \int_0^{\infty} CS(x) x^{w} \frac{dx}{x} = \Gamma(w) CS(\tfrac{\pi w}{2}),
\end{equation}
for $0 < \text{Re}(w) < 1$.  By interchanging the orders of integration (justified, say, by dissecting the $x$-integral into $x \leq A$ and $x > A$; one treats the $x > A$ integral by a contour shift, while the $x \leq A$ integral is interchanged with the $s$ and $u$ integrals, and then extended to all $x$ by integration by parts), we obtain that \eqref{eq:fourierintegral} is
\begin{multline}
\leg{1}{2\pi i}^2 \int_{(c_u)} \int_{(c_s)} \widetilde{\Phi}(1+u) X^{u} \leg{lna^2}{\pi |k| }^{\tfrac{s}{2}-u}  \frac{G(s)}{s} g_{\alpha}(s) n^{-s} 
\\
\Gamma(\tfrac{s}{2}-u) (C + \text{sgn}(k) S)\lp\tfrac{\pi}{2}(\tfrac{s}{2}-u)\rp ds du.
\end{multline}
Here the requirement is $0 < \thalf c_s - c_u < 1$.  We initially take $c_u = 0$, $c_s = \varepsilon$.  Consequently, we get
\begin{multline}
\label{eq:MNpreDirichlet}
 M_N(k \neq 0) = \frac{X}{2} \sum_{\substack{(a,2l) = 1 \\ a \leq Y}} \frac{\mu(a)}{a^2}  \sum_{(n,2a)=1} \frac{1}{n^{\thalf + \alpha}}  \sum_{k \neq 0} (-1)^k \frac{G_k(ln)}{ln} 
\leg{1}{2\pi i}^2 
\\
\int_{(c_u)} \int_{(c_s)} 
\widetilde{\Phi}(1+u) X^{u} \leg{lna^2}{\pi |k| }^{\tfrac{s}{2}-u}  \frac{G(s)}{s} g(s) n^{-s} 
\Gamma(\tfrac{s}{2}-u) (C + \text{sgn}(k) S)\lp\tfrac{\pi}{2}(\tfrac{s}{2}-u)\rp ds du.
\end{multline}
Next we take $c_s$ sufficiently large so that the sums over $n$ and $k$ converge absolutely.

\subsection{A Dirichlet series computation}
We need to compute
\begin{equation}
H(k_1,l;v,w) = \sum_{k_2=1}^{\infty} \sum_{(n,2a) = 1} \frac{G_{k_1 k_2^2}(ln)/ln}{k_2^v n^{w}},
\end{equation}
where $k_1$ is squarefree, and recall that $l$ is odd and squarefree.  It is clear from Lemma \ref{lemma:Gk} that $G_{k_1 k_2^2}(n)$ is multiplicative in $k_2$ and $n$.  The analogous computation over squares here was necessary for Soundararajan to obtain the full main term in the mollified second moment of this family \cite{Sound}, while here we require it to cancel certain other terms; it does not give a main term.

In this section, we show
\begin{mylemma}
 We have
\begin{equation}
 H(k_1,l;v,w) =  \frac{\zeta_l(v) \zeta_{2al}(v+2w) L_{2al}(\thalf + w, \chi_{k_1})}{\zeta_{2al}(1+2w) L_{2al}(\thalf + v + w, \chi_{k_1})} \prod_{p |l} J_p(k_1;v,w),
\end{equation}
where
\begin{equation}
 J_p(k_1;v,w) =  p^{w}  \frac{ -(1-p^{-v-2w}) \lp 1- \leg{k_1}{p} p^{-\half - w} \rp + (1-p^{-1-2w})  \lp1- \leg{k_1}{p} p^{-\half-v-w} \rp }{(1-p^{-v}) (1-p^{-v-2w}) \lp 1- \leg{k_1}{p} p^{-\half - w} \rp},
\end{equation}
and where the meaning of the subscript $l$ on $\zeta_{l}$, etc. is that the Euler factors at primes $p$ with $p | l$ are removed.
Furthermore,
\begin{equation}
\label{eq:Jestimate}
 J_p(k_1;v,w) \ll p^{-1/2},
\end{equation}
for $\text{Re}(v) \geq 2$, $\text{Re}(w) \geq 0$.
\end{mylemma}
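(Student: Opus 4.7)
The plan is to prove the identity by establishing that both sides factor as Euler products and then matching local factors prime by prime. Since Lemma \ref{lemma:Gk} asserts both that $G_k$ is multiplicative in its argument and that $G_k(p^{\beta})$ depends on $k$ only through $v_p(k)$, the inner sum $\sum_{(n,2a)=1} G_{k_1 k_2^2}(ln) n^{-w}/(ln)$ separates across primes for fixed $k_2$; after summing $k_2^{-v}$ it follows that the whole expression $H(k_1,l;v,w)$ becomes a product of local factors indexed by $p$ and involving the pair $(\gamma,\delta) = (v_p(k_2), v_p(n))$. At primes $p \mid 2a$ neither $n$ nor $l$ contributes a prime power, so $G_{k_1 k_2^2}(p^0) = 1$ and the local factor reduces to $(1-p^{-v})^{-1}$, which is precisely the contribution from $\zeta_l(v)$ at such primes (all other factors on the right being trivial there).

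For primes $p \nmid 2al$ and primes $p \mid l$ I would split into the subcases $p \nmid k_1$ and $p \mid k_1$: since $k_1$ is squarefree, $\alpha := v_p(k_1 k_2^2) = 2\gamma + v_p(k_1)$ has a fixed parity within each subcase. In each subcase \eqref{eq:Gauss} presents $G_{k_1 k_2^2}(p^{\beta})/p^{\beta}$ as a piecewise function of $\beta$ that vanishes for $\beta \geq \alpha + 2$: the even $\beta$ in $[0,\alpha]$ each contribute $(1-p^{-1})$, the single boundary term at $\beta = \alpha + 1$ contributes either $\leg{k_1}{p} p^{-\thalf}$ (when $p \nmid k_1$) or $-p^{-1}$ (when $p \mid k_1$), and the remaining $\beta$ contribute $0$. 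Summing these against $p^{-\delta w}$ (with $\beta = \delta$ if $p \nmid l$ and $\beta = \delta + 1$ if $p \mid l$) and then against $p^{-\gamma v}$ produces, in each case, a rational function in $p^{-v}$, $p^{-w}$, and $\leg{k_1}{p}$.

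The main step is then to check that this rational function agrees, prime by prime, with the proposed right-hand side. At primes $p \nmid 2al$ the target Euler factor is
\[
\frac{1 - p^{-1-2w}}{(1-p^{-v})(1-p^{-v-2w})} \cdot \frac{1 - \leg{k_1}{p} p^{-\thalf - v - w}}{1 - \leg{k_1}{p} p^{-\thalf - w}},
\]
and the match can be verified by clearing denominators and expanding, using $\leg{k_1}{p}^2 = 1$ when $p \nmid k_1$ (the $p \mid k_1$ case is cleaner since $\leg{k_1}{p} = 0$). At primes $p \mid l$, where the calculation uses $\beta = \delta + 1$, the resulting rational function is by definition $J_p(k_1;v,w)$; note that $\zeta_{2al}(v+2w)$, $\zeta_{2al}(1+2w)$, and $L_{2al}(\cdot,\chi_{k_1})$ all omit these primes, so the entire contribution at $p \mid l$ is packaged into $J_p$, while $\zeta_l(v)$ contributes nothing.

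For the estimate \eqref{eq:Jestimate}, I would expand the numerator of $J_p$ and reorganize it as a sum of three terms of the form $\leg{k_1}{p} p^{-\thalf}(1 - p^{-v})$, $p^{-w}(p^{-v} - p^{-1})$, and $\leg{k_1}{p} p^{-\thalf - v - 2w}(p^{-1} - 1)$, each of which is $O(p^{-1/2})$ when $\text{Re}(v) \geq 2$ and $\text{Re}(w) \geq 0$; the denominator is bounded away from zero uniformly in $p \geq 3$ in the same range. The main obstacle is the algebraic bookkeeping in matching the local factor at $p \nmid 2al$ in the subcase $p \nmid k_1$, since the identification only comes out after full expansion and use of $\leg{k_1}{p}^2 = 1$; once that is in hand, the rest of the proof, including the estimate \eqref{eq:Jestimate}, is routine.
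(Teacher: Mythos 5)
Your proposal is correct and follows essentially the same route as the paper: factor $H$ into an Euler product, evaluate the local factor at each prime via Lemma \ref{lemma:Gk} with the case split $p\mid 2a$, $p\nmid 2al$, $p\mid l$ (and within each, $p\mid k_1$ versus $p\nmid k_1$, the shift $\beta=\delta+1$ at $p\mid l$), and then match rational functions; your explicit three-term expansion of the numerator of $J_p$ for the bound \eqref{eq:Jestimate} is a slightly more detailed version of what the paper declares "clear from the representation." The only nitpick is that the even $\beta\le\alpha$ terms contribute $\phi(p^{\beta})/p^{\beta}$, which is $1$ (not $1-p^{-1}$) when $\beta=0$; this does not affect the argument.
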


\begin{proof}
To begin, we factor $H$ as follows
\begin{align}
H(k_1,l;v,w) &= \prod_{p | 2a} (1 - p^{-v})^{-1} \sum_{(k_2,2a)=1}\sum_{(n,2a) = 1} \frac{G_{k_1 k_2^2}(ln)/ln}{k_2^v n^{w}}
\\
&= \prod_{p | 2a} (1 - p^{-v})^{-1} \lp \sum_{(k_2,2al)=1}\sum_{(n,2al) = 1} \frac{G_{k_1 k_2^2}(n)/n}{k_2^v n^{w}} \rp \lp \prod_{k_2, n | l^{\infty}} \frac{G_{k_1 k_2^2}(ln)/ln}{k_2^v n^{w}} \rp.
\end{align}
Now we need to compute
\begin{equation}
H_p(k_1;v,w) = \sum_{j=0}^{\infty} \sum_{r=0}^{\infty} \frac{G_{k_1 p^{2j}}(p^r)/p^r}{p^{jv +rw}},
\end{equation}
where there are two cases depending on if $p \nmid k_1$ or $p | k_1$.  For the former case, we get
\begin{equation}
 H_p(k_1;v,w) =  \sum_{0 \leq r \leq j < \infty} \frac{\phi(p^{2r})/p^{2r}}{p^{jv +2rw}} + \leg{k_1}{p} \sum_{j=0}^{\infty} \frac{p^{2j + \half} p^{-2j-1}}{p^{jv + (2j+1)w}},
\end{equation}
which we simplify as
\begin{align}
 H_p(k_1;v,w) &=  (1-p^{-v})^{-1} \sum_{r=0}^{\infty} \frac{\phi(p^{2r})/p^{2r}}{p^{r(2w +v)}} + \leg{k_1}{p} p^{-\half - w} (1-p^{-v-2w})^{-1}
\\
&=(1-p^{-v})^{-1} \lp1 + (1-\frac{1}{p}) \frac{p^{-v-2w}}{1-p^{-v-2w}} \rp + \leg{k_1}{p} p^{-\half - w} (1-p^{-v-2w})^{-1}
\\
&=(1-p^{-v-2w})^{-1} \lp (1-p^{-v})^{-1} \lp 1 - p^{-v-2w} + (1-\frac{1}{p}) p^{-v-2w} \rp  +  \leg{k_1}{p} p^{-\half - w}\rp
\\
&= (1-p^{-v-2w})^{-1} \lp (1-p^{-v})^{-1} \lp 1 - p^{-1-v-2w} \rp  +  \leg{k_1}{p} p^{-\half - w}\rp
\\
&= (1-p^{-v-2w})^{-1} (1-p^{-v})^{-1} \lp  \lp 1 - p^{-1-v-2w} \rp  +  (1-p^{-v}) \leg{k_1}{p} p^{-\half - w}\rp.
\end{align}

Next pull out the factor $(1-(k_1/p) p^{-\half-w})^{-1}$, and note
\begin{align}
 \lp1-\leg{k_1}{p} p^{-\half-w}\rp \lp  \lp 1 - p^{-1-v-2w} \rp  +  (1-p^{-v}) \leg{k_1}{p} p^{-\half - w}\rp
\\
= 1 - p^{-1-v-2w} + (1-p^{-v})\leg{k_1}{p} p^{-\half-w} - \leg{k_1}{p} p^{-\half-w} (1 - p^{-1-v-2w})  - (1-p^{-v}) p^{-1-2w}
\\
= (1 - p^{-1-2w}) \lp1- \leg{k_1}{p} p^{-\half-v-w} \rp. 
\end{align}
In conclusion, we get
\begin{equation}
\label{eq:Hp}
 H_p(k_1;v,w) = \frac{(1-p^{-1-2w})  \lp1- \leg{k_1}{p} p^{-\half-v-w} \rp}{ (1-p^{-v})(1-p^{-v-2w}) \lp 1- \leg{k_1}{p} p^{-\half - w} \rp},
\end{equation}
for $p \nmid k_1$.

The case where $p | k_1$ is similar, and we compute in this situation that
\begin{align}
 H_p(k_1;v,w) &= \sum_{0 \leq r \leq j < \infty} \frac{\phi(p^{2r})/p^{2r}}{p^{jv +2rw}} - \sum_{j=0}^{\infty} \frac{p^{2j + 1} p^{-2j-2}}{p^{jv + (2j+2)w}}
\\
&= (1-p^{-v})^{-1} \lp1 + (1-\frac{1}{p}) \frac{p^{-v-2w}}{1-p^{-v-2w}} \rp - p^{-1-2w} (1-p^{-v-2w})^{-1}
\\
&= (1-p^{-v-2w})^{-1} \lp (1-p^{-v})^{-1} (1-p^{-1-v-2w}) -p^{-1-2w}) \rp
\\
&= (1-p^{-v-2w})^{-1} (1-p^{-v})^{-1} \lp (1-p^{-1-v-2w}) - (1-p^{-v}) p^{-1-2w} \rp
\\
&= (1-p^{-v-2w})^{-1} (1-p^{-v})^{-1} (1-p^{-1-2w}).
\end{align}
Notice that the expression \eqref{eq:Hp} reduces to this one for $p | k_1$, so we may actually use \eqref{eq:Hp} for all $k_1$.

Finally we need to compute the contribution from $n, k |l^{\infty}$ (the notation meaning all the primes dividing $n$ and $k$ also divide $l$).  In this case the computation reduces to
\begin{align}
J_p(k_1;v,w)&:= \sum_{j=0}^{\infty} \sum_{r=0}^{\infty} \frac{G_{k_1 p^{2j}}(p^{r+1})/p^{r+1}}{p^{jv + rw}} 
\\
&= p^{w} \sum_{j=0}^{\infty} \sum_{r=0}^{\infty} \frac{G_{k_1 p^{2j}}(p^{r+1})/p^{r+1}}{p^{jv + (r +1)w}}
=
p^{w} \sum_{j=0}^{\infty} \sum_{r=1}^{\infty} \frac{G_{k_1 p^{2j}}(p^{r})/p^{r}}{p^{jv + rw}}
\\
&= p^{w} \lp -\frac{1}{1-p^{-v}} + \frac{(1-p^{-1-2w})  \lp1- \leg{k_1}{p} p^{-\half-v-w} \rp}{ (1-p^{-v})(1-p^{-v-2w}) \lp 1- \leg{k_1}{p} p^{-\half - w} \rp} \rp
\\
&=  p^{w}  \frac{ -(1-p^{-v-2w}) \lp 1- \leg{k_1}{p} p^{-\half - w} \rp + (1-p^{-1-2w})  \lp1- \leg{k_1}{p} p^{-\half-v-w} \rp }{(1-p^{-v}) (1-p^{-v-2w}) \lp 1- \leg{k_1}{p} p^{-\half - w} \rp} 
\end{align}
The estimate \eqref{eq:Jestimate} is clear from the previous representation.
\end{proof}

\subsection{Another Dirichlet series computation}
We need to slightly generalize the computation from the previous section.  Namely, we require a formula for
\begin{equation}
A_{\epsilon,l}(u,w):= \sum_{(n,2a)=1}  \sum_{k =1}^{\infty} (-1)^k \frac{G_{\epsilon k}(ln)/ln}{ |k|^u n^{w}},
\end{equation}
where $\epsilon = \pm 1$.
Of course it is also natural to write $k = k_1 k_2^2$ where $k_1$ is squarefree.  Then we sum over $k_1$ odd and $k_1$ even separately.  Thus we get
\begin{align}
 A_{\epsilon,l}(u,w) &= \sumstar_{k_1 >0} \frac{1}{|k_1|^u} \sum_{(n,2a)=1}  \sum_{k_2=1}^{\infty} (-1)^{k_1 k_2} \frac{G_{\epsilon k_1 k_2^2}(ln)/ln}{ |k_2|^{2u} n^{w}}
\\
&=  \sumstar_{k_1 \text{ even}} \frac{1}{|k_1|^u} \sum_{(n,2a)=1}  \sum_{k_2=1}^{\infty} \frac{G_{\epsilon k_1 k_2^2}(ln)}{ln |k_2|^{2u} n^{w}} + \sumstar_{k_1 \text{ odd}} \frac{1}{|k_1|^u} \sum_{(n,2a)=1}  \sum_{k_2=1}^{\infty} (-1)^{k_2} \frac{G_{\epsilon k_1 k_2^2}(ln)}{ln |k_2|^{2u} n^{w}}.
\end{align}

For each fixed $k_1$ we have computed the inner Dirichlet series on the left (it is $H(\epsilon k_1,l;v,w)$).  Then we compute
\begin{equation}
H_{-1}(\epsilon k_1,l;v,w) := \sum_{(n,2a)=1}  \sum_{k_2=1}^{\infty} (-1)^{k_2} \frac{G_{\epsilon k_1 k_2^2}(ln)/ln}{ k_2^{v} n^{w}}
\end{equation}
We first do some simple manipulations to get rid of the $(-1)^{k_2}$ factor, getting
\begin{equation}
H_{-1}(\epsilon k_1,l;v,w) = \sum_{k_2 \text{ even}} \sum_{(n,2a) = 1} \frac{G_{\epsilon k_1 k_2^2}(ln)/ln}{k_2^v n^{w}} - \sum_{k_2 \text{ odd}} \sum_{(n,2a) = 1} \frac{G_{\epsilon k_1 k_2^2}(ln)/ln}{k_2^v n^{w}}.
\end{equation}
To the the latter we use inclusion-exclusion to remove the condition that $k_2$ is odd and then for the sum over $k_2$ even we apply the change of variables $k_2 \rightarrow 2k_2$, using $G_{4 \epsilon k_1 k_2^2}(ln) = G_{\epsilon k_1 k_2^2}(ln)$ since $ln$ is odd, to get
\begin{align}
H_{-1}(\epsilon k_1,l;v,w) &= 2^{-v} \sum_{k_2=1}^{\infty} \sum_{(n,2a) = 1} \frac{G_{\epsilon k_1 k_2^2}(ln)/ln}{k_2^v n^{w}} - (1-2^{-v}) \sum_{k_2 =1}^{\infty} \sum_{(n,2a) = 1} \frac{G_{\epsilon k_1 k_2^2}(ln)/ln}{k_2^v n^{w}}
\\
&= (2^{1-v}-1) H(\epsilon k_1,l;v,w).
\end{align}
Thus we have
\begin{equation}
 A_{l,\epsilon}(u,w) = \sumstar_{k_1 \text{ even}} \frac{1}{|k_1|^u} H(\epsilon k_1,l;2u,w) + \sumstar_{k_1 \text{ odd}} \frac{1}{|k_1|^u} (2^{1-2u}-1) H(\epsilon k_1,l;2u,w).
\end{equation}
Using our computation of $H(\epsilon k_1,l;v,w)$, we get
\begin{multline}
 A_{l,\epsilon}(u,w) = \frac{\zeta_l(2u) \zeta_{2al}(2u+2w)}{\zeta_{2al}(1+2w)} \left(\sumstar_{ k_1 \text{ even}} \frac{1}{|k_1|^u} + \sumstar_{k_1 \text{ odd}} \frac{1}{|k_1|^u} (2^{1-2u}-1) \right) 
\\
\frac{ L_{2al}(\thalf + w, \chi_{\epsilon k_1})}{ L_{2al}(\thalf + 2u + w, \chi_{\epsilon k_1})} \prod_{p |l} J_p(\epsilon k_1;2u,w).
\end{multline}

\subsection{Calculating $M_N(k \neq 0)$}
With these Dirichlet series in hand, we now return to our calculation of $M_N(k \neq 0)$ from \eqref{eq:MNpreDirichlet}, giving
\begin{multline}
 M_N(k \neq 0) = \frac{X}{2} \sum_{\substack{(a,2l) = 1 \\ a \leq Y}} \frac{\mu(a)}{a^2} 
\leg{1}{2\pi i}^2 \int_{(c_u =0)} \int_{(c_s = 2)} \widetilde{\Phi}(1+u) X^{u} \leg{la^2}{\pi }^{\tfrac{s}{2}-u}  \frac{G(s)}{s} g_{\alpha}(s) 
\\
\sum_{\epsilon = \pm 1} A_{l,\epsilon}(\tfrac{s}{2} - u, \thalf + \alpha + \tfrac{s}{2} + u) 
\Gamma(\tfrac{s}{2}-u) (C + \text{sgn}(\epsilon) S)\lp\tfrac{\pi}{2}(\tfrac{s}{2}-u)\rp ds du.
\end{multline}
Note
\begin{multline}
 A_{l,\epsilon}(\tfrac{s}{2} - u, \thalf + \alpha + \tfrac{s}{2} + u)
  = \frac{\zeta_l(s-2u) \zeta_{2al}(1 + 2\alpha + 2s)}{\zeta_{2al}(2+2\alpha + s + 2u)} 
\left(\sumstar_{k_1 \text{ even}} \frac{1}{|k_1|^{\tfrac{s}{2}-u}} + \sumstar_{k_1 \text{ odd}} \frac{(2^{1-s+2u}-1)}{|k_1|^{\tfrac{s}{2}-u}}  \right) 
\\
\frac{ L_{2al}(1 + \alpha + \tfrac{s}{2} +u, \chi_{\epsilon k_1})}{ L_{2al}(1 + \tfrac{3s}{2} + \alpha -u, \chi_{\epsilon k_1})} \prod_{p |l} J_p(\epsilon k_1;s-2u,\thalf + \alpha + \tfrac{s}{2} + u).
\end{multline}
Now we move $c_s$ to $1/2 + \varepsilon$, and $c_u$ to $-3/4$.  It is evident that we crossed a pole at $\alpha + \tfrac{s}{2} + u = 0$ when $k_1 =1$, $\epsilon =1$ only.  On the new line we bound everything trivially and use the known estimate on the first moment of quadratic Dirichlet L-functions to see that the sum over $k_1$ converges absolutely on these lines of integration.

Letting $M_N(k_1 = 1)$ be the contribution from the residue which appears with $k_1 = 1$, $\epsilon=1$ only, we get
\begin{equation}
  M_N(k \neq 0) = M_N(k_1 = 1) + O(X^{1/4 + \varepsilon} Y l^{1/2 + \varepsilon}).
\end{equation}

\subsection{Computing $M_N(k_1 = 1)$}
Note that the residue of $A_{l,1}(\tfrac{s}{2} -u, \thalf + \alpha + \tfrac{s}{2} + u)$ at $u = -\alpha - \tfrac{s}{2}$ is
\begin{equation}
\frac{\phi(2al)}{2al} \frac{\zeta_l(2s + 2\alpha)}{\zeta_{2al}(2)}  (2^{1-2s -2\alpha}-1)  
\prod_{p |l} J_p(1;2s + 2\alpha,\thalf).
\end{equation}
Also note
\begin{align}
 J_p(1;2s + 2\alpha,\thalf) &= p^{1/2}  \frac{ -(1-p^{-1 - 2\alpha -2s}) \lp 1- p^{-1} \rp + (1-p^{-2})  \lp1-  p^{-1-2\alpha -2s} \rp }{(1-p^{-2\alpha -2s}) (1-p^{-1 -2\alpha -2s}) \lp 1-  p^{-1} \rp}
\\
&=
p^{-1/2}  (1-p^{-2\alpha -2s})^{-1}.
\end{align}
Thus we get that the residue of $A_{l,1}$ is
\begin{equation}
 \frac{\phi(a) \phi(l)}{2al} l^{-1/2} \frac{\zeta(2s + 2\alpha)}{\zeta_{2al}(2)}  (2^{1-2s -2\alpha}-1).
\end{equation}

Inserting this above, we get
\begin{multline}
 M_N(k_1 = 1) = \frac{X}{2} \sum_{\substack{(a,2l) = 1 \\ a \leq Y}} \frac{\mu(a)}{a^2} 
\frac{1}{2\pi i}\int_{(\varepsilon)} \widetilde{\Phi}(1-\alpha - \tfrac{s}{2}) X^{-\alpha - \frac{s}{2}} \leg{la^2}{\pi }^{\alpha+ s}  \frac{G(s)}{s} g_{\alpha}(s) 
\\
 \frac{\phi(a) \phi(l)}{2al} l^{-1/2} \frac{\zeta(2\alpha +2s)}{\zeta_{2al}(2)}  (2^{1-2\alpha -2s}-1)
\Gamma(\alpha + s) (C + S)\lp\tfrac{\pi}{2}(\alpha + s)\rp ds.
\end{multline}

\subsection{Simplifying $M_N(k_1 = 1)$}
In this section we show
\begin{myprop}
 We have
\begin{multline}
 M_N(k_1 = 1) = -\frac{X^{1-\alpha} \phi(l)/l}{2 l^{1/2}}  \gamma_{\alpha} \sum_{\substack{(a,2l) = 1 \\ a \leq Y}} \frac{\mu(a) \phi(a)/a}{a^2 \zeta_{2al}(2)} 
\frac{1}{2\pi i}\int_{(-\varepsilon)} \widetilde{\Phi}(1-\alpha + \tfrac{s}{2}) X^{  \frac{s}{2}} (la^2)^{ \alpha - s} \\ 
\frac{G(s)}{s} g_{-\alpha}(s)
 \zeta_2(1-2\alpha +2s) 
 ds,
\end{multline}
and
\begin{multline}
\label{eq:M-Nk1=1}
 M_{-N}(k_1 = 1) = -\frac{X }{2 \zeta_2(2) l^{1/2}} \prod_{p|l} (1+p^{-1})^{-1}   \sum_{\substack{(a,2l) = 1 \\ a \leq Y}} \frac{\mu(a) }{a^2} \prod_{p|a}(1+p^{-1})^{-1}
\\
\frac{1}{2\pi i}\int_{(-\varepsilon)} \widetilde{\Phi}(1 + \tfrac{s}{2}) X^{  \frac{s}{2}} (la^2)^{- \alpha -s}  
\frac{G(s)}{s} g_{\alpha}(s)
 \zeta_2(1+2\alpha + 2s) 
 ds.
\end{multline}
\end{myprop}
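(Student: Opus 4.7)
The plan is to take the expression for $M_N(k_1=1)$ stated immediately before the proposition and transform it by a clean substitution $s \to -s$ followed by a single functional-equation identity that bundles together all the $\zeta$, $\Gamma$, $(C+S)$ factors. The upshot of that identity is
\begin{equation*}
 g_{\alpha}(-s)\,\Gamma(\alpha-s)\,\zeta(2\alpha-2s)\,(2^{1-2\alpha+2s}-1)\,(C+S)\!\lp\tfrac{\pi}{2}(\alpha-s)\rp
 \;=\; 2\,\pi^{\alpha-s}\, g_{-\alpha}(s)\,\gamma_{\alpha}\,\zeta_2(1-2\alpha+2s),
\end{equation*}
which matches precisely the integrand of the proposed formula. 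Once this is established, substituting and bookkeeping the prefactors gives the stated formula for $M_N(k_1=1)$; the formula for $M_{-N}(k_1=1)$ is then obtained mechanically from Remark \ref{remark:M1toM2}.

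To prove the boxed identity, first I would use the Riemann functional equation in the form $\pi^{-s/2}\Gamma(s/2)\zeta(s) = \pi^{-(1-s)/2}\Gamma((1-s)/2)\zeta(1-s)$ with $s = 2\alpha - 2s$ to rewrite
\begin{equation*}
 \Gamma(\alpha-s)\zeta(2\alpha-2s) = \pi^{2\alpha-2s-1/2}\,\Gamma(\tfrac{1}{2}-\alpha+s)\,\zeta(1-2\alpha+2s).
\end{equation*}
Next, on the combination $\Gamma(\tfrac{1}{2}+\alpha-s)/2)\,\Gamma(\tfrac{1}{2}-\alpha+s)\,(C+S)(\tfrac{\pi}{2}(\alpha-s))$ I would apply the Legendre duplication formula $\Gamma(2z) = 2^{2z-1}\pi^{-1/2}\Gamma(z)\Gamma(z+\tfrac12)$ to split $\Gamma(\tfrac12-\alpha+s)$, then pair the leftover $\Gamma((\tfrac12+\alpha-s)/2)\,\Gamma((\tfrac32-\alpha+s)/2)$ via the reflection formula $\Gamma(w)\Gamma(1-w)=\pi/\sin(\pi w)$, and finally observe that $\sin(\pi/4+\pi(\alpha-s)/2) = \tfrac{1}{\sqrt{2}}(C+S)(\pi(\alpha-s)/2)$, producing the cancellation of $(C+S)$ and the emergence of $\Gamma((\tfrac12-\alpha+s)/2)$. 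Finally, writing $2^{1-2\alpha+2s}-1 = 2^{1-2\alpha+2s}(1-2^{2\alpha-2s-1})$ converts $\zeta\cdot (2^{1-2\alpha+2s}-1)$ into $2^{1-2\alpha+2s}\zeta_2(1-2\alpha+2s)$; matching this against the definitions $g_{-\alpha}(s) = (8/\pi)^{s/2}\Gamma((\tfrac12-\alpha+s)/2)/\Gamma((\tfrac12+\alpha)/2)$ and $\gamma_\alpha = (8/\pi)^{-\alpha}\Gamma((\tfrac12-\alpha)/2)/\Gamma((\tfrac12+\alpha)/2)$ and collecting all powers of $2$ and $\pi$ yields exactly the prefactor $2\pi^{\alpha-s}$.

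Once the identity is in hand, substituting into the expression from the previous subsection (while remembering that $G$ is even and that $s\to -s$ preserves the contour $(-\varepsilon)$ without sign) gives the stated formula for $M_N(k_1=1)$. For $M_{-N}(k_1=1)$, I would apply the three-step involution of Remark \ref{remark:M1toM2}: swap $\alpha\leftrightarrow -\alpha$ throughout, replace $\widetilde{\Phi}(1-\alpha+s/2)$ by $\widetilde{\Phi_{-\alpha}}(1+\alpha+s/2) = \widetilde{\Phi}(1+s/2)$, and multiply by $\gamma_\alpha X^{-\alpha}$. The factor $\gamma_\alpha\gamma_{-\alpha}=1$ collapses, and the arithmetic prefactor simplifies via $\gcd(a,l)=1$ together with $(1-p^{-2})=(1-p^{-1})(1+p^{-1})$, transforming $\phi(al)/(al\,\zeta_{2al}(2))$ into $\zeta_2(2)^{-1}\prod_{p|al}(1+p^{-1})^{-1}$, which matches \eqref{eq:M-Nk1=1}.

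The main obstacle, if any, is purely bookkeeping: coordinating all the explicit shifts in the Gamma arguments, the many factors of $2$ and $\pi$, and the precise conventions on $\zeta_{2al}$ versus $\zeta_2$ so that everything lands in the target form. The essential content is the one functional-equation identity displayed above; the rest is mechanical symbol pushing, with the Euler-product verification for $M_{-N}(k_1=1)$ being the only spot where one has to be careful about which primes are excluded.
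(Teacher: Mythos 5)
Your proposal is correct and follows essentially the same route as the paper: your bundled identity is exactly the paper's chain (zeta functional equation, the $\cos\theta+\sin\theta=\sqrt{2}\cos(\tfrac{\pi}{4}-\theta)$ step, and the gamma identity $\pi^{-1/2}2^{1-u}\cos(\tfrac{\pi u}{2})\Gamma(u)=\Gamma(\tfrac{u}{2})/\Gamma(\tfrac{1-u}{2})$, which you re-derive from duplication and reflection) read after the substitution $s\to-s$ rather than before, and I have checked that the powers of $2$ and $\pi$ in it balance. The passage to $M_{-N}(k_1=1)$ via Remark \ref{remark:M1toM2} and the Euler-product simplification of $\phi(al)/(al\,\zeta_{2al}(2))$ also matches the paper.
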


\begin{proof}
First use the functional equation for the Riemann zeta function in the form
\begin{equation}
 \pi^{-\alpha -s} \Gamma(\alpha + s) \zeta(2\alpha +2s) = \pi^{-\thalf + \alpha + s} \Gamma(\thalf -\alpha -s) \zeta(1-2\alpha -2s),
\end{equation}
giving
\begin{multline}
 M_N(k_1 = 1) = \frac{X \phi(l)/l}{2 l^{1/2}} \half \sum_{\substack{(a,2l) = 1 \\ a \leq Y}} \frac{\mu(a) \phi(a)/a}{a^2 \zeta_{2al}(2)} 
\frac{1}{2\pi i}\int_{(\varepsilon)} \widetilde{\Phi}(1-\alpha - \tfrac{s}{2}) X^{-\alpha - \frac{s}{2}} (la^2)^{\alpha + s}  \frac{G(s)}{s} g_{\alpha}(s) 
\\
\pi^{-\thalf + \alpha + s} \Gamma(\thalf -\alpha -s)  \zeta(1-2\alpha -2s)  (2^{1 -2\alpha -2s}-1) (C + S)\lp\tfrac{\pi}{2}(\alpha +s)\rp ds.
\end{multline}
Next note
\begin{equation}
 (2^{1-2\alpha -2s} -1) \zeta(1-2\alpha -2s) = 2^{1-2\alpha -2s} \zeta_2(1-2\alpha -2s),
\end{equation}
so we get
\begin{multline}
 M_N(k_1 = 1) = \frac{X^{1-\alpha} \phi(l)/l}{2 l^{1/2}}  \sum_{\substack{(a,2l) = 1 \\ a \leq Y}} \frac{\mu(a) \phi(a)/a}{a^2 \zeta_{2al}(2)} 
\frac{1}{2\pi i}\int_{(\varepsilon)} \widetilde{\Phi}(1-\alpha - \tfrac{s}{2}) X^{-\frac{s}{2}} (la^2)^{\alpha + s}  \frac{G(s)}{s} 
\\
\half \pi^{-\thalf + \alpha + s} g_{\alpha}(s) \Gamma(\thalf -\alpha -s)    2^{1 -2\alpha -2s} (C + S)\lp\tfrac{\pi}{2}(\alpha + s)\rp \zeta_2(1-2\alpha -2s)ds.
\end{multline}
Next we manipulate the gamma functions and $C+ S$ term, namely
\begin{multline}
\half g_{\alpha}(s) \pi^{-\thalf + \alpha +s} \Gamma(\thalf -\alpha -s)  2^{1 -2\alpha -2s} (C + S)\lp\tfrac{\pi}{2}(\alpha + s)\rp 
\\
=  \frac{\Gamma\leg{\thalf + \alpha + s}{2}}{\Gamma\leg{\thalf + \alpha}{2}}
 \pi^{-\thalf + \alpha  + \tfrac{s}{2}} \Gamma(\thalf  -\alpha -s)  2^{ -2\alpha - \frac{s}{2}} (C + S)\lp\tfrac{\pi}{2}(\alpha + s )\rp.
\end{multline}
Now we use the trigonometric identity
\begin{equation}
 \cos(\theta) + \sin(\theta) = \sqrt{2} \cos(\tfrac{\pi}{4} -\theta)
\end{equation}
to get
\begin{align}
\half g_{\alpha}(s) \pi^{-\thalf +\alpha + s} \Gamma(\thalf -\alpha -s )  2^{1 -2\alpha -2s} (C + S)\lp\tfrac{\pi}{2}(\alpha + s)\rp 
\\
=  \frac{\Gamma\leg{\thalf + \alpha + s}{2}}{\Gamma\leg{\thalf + \alpha}{2}}
 \pi^{-\thalf+ \alpha  + \tfrac{s}{2} } \Gamma(\thalf -\alpha  -s)  2^{\thalf -2\alpha  - \frac{s}{2}} \cos\lp\tfrac{\pi}{2}(\thalf- \alpha -s )\rp.
\end{align}
Next we use gamma function identity (see chapter 10 of \cite{Davenport})
\begin{equation}
\pi^{-\thalf} 2^{1-u} \cos(\tfrac{\pi}{2} u) \Gamma(u) = \frac{\Gamma \leg{u}{2}}{\Gamma\leg{1-u}{2}}
\end{equation}
for $u=\half - \alpha -s$ to get
\begin{multline}
 \frac{\Gamma\leg{\thalf + \alpha + s}{2}}{\Gamma\leg{\thalf + \alpha}{2}} 
 \pi^{-\thalf + \alpha  + \tfrac{s}{2}} \Gamma(\thalf -\alpha -s)  2^{\thalf  -2\alpha - \frac{s}{2}} \cos\lp\tfrac{\pi}{2}(\thalf - \alpha -s)\rp
\\
 =  \frac{\Gamma\leg{\thalf + \alpha + s}{2}}{\Gamma\leg{\thalf + \alpha}{2}}
 \pi^{-\thalf  + \alpha + \tfrac{s}{2}}   2^{\thalf  -2\alpha - \frac{s}{2}} 
\pi^{\thalf} 2^{-1 + (\half - \alpha -s)}
\frac{\Gamma \leg{\half - \alpha - s }{2}}{\Gamma\leg{1-(\half - \alpha  - s)}{2}}
= 
\frac{\Gamma \leg{\half - \alpha - s }{2}}{\Gamma\leg{\thalf + \alpha}{2}}
 \leg{8}{\pi}^{-\alpha-\frac{s}{2}}.
\end{multline}
Thus we get
\begin{equation}
 M_N(k_1 = 1) = \frac{X^{1-\alpha} \phi(l)/l}{2 l^{1/2}} \sum_{\substack{(a,2l) = 1 \\ a \leq Y}} \frac{\mu(a) \phi(a)/a}{a^2 \zeta_{2al}(2)} I,
\end{equation}
where
\begin{equation}
 I = \frac{1}{2\pi i}\int_{(\varepsilon)} \widetilde{\Phi}(1-\alpha - \tfrac{s}{2}) X^{ - \frac{s}{2}} (la^2)^{\alpha +s}  \frac{G(s)}{s} \zeta_2(1 -2\alpha -2s) \frac{\Gamma \leg{\half - \alpha  - s}{2}}{\Gamma\leg{\thalf + \alpha}{2}}
 \leg{8}{\pi}^{-\alpha-\frac{s}{2}}  ds.
\end{equation}

Next note
\begin{equation} 
\leg{8}{\pi}^{-\alpha - \tfrac{s}{2}} \frac{\Gamma \leg{\half - \alpha -s}{2}}{\Gamma\leg{\thalf + \alpha}{2}} = \gamma_{\alpha} g_{-\alpha}(-s),
\end{equation}
so we get
\begin{equation}
 I= \gamma_{\alpha}
\frac{1}{2\pi i}\int_{(\varepsilon)} \widetilde{\Phi}(1-\alpha - \tfrac{s}{2}) X^{ - \frac{s}{2}} (la^2)^{ \alpha +s}  \frac{G(s)}{s} g_{-\alpha}(-s)
 \zeta_2(1 -2\alpha -2s) ds.
\end{equation}
Doing the change of variables $s \rightarrow -s$ gives
\begin{multline}
 M_N(k_1 = 1) = -\frac{X^{1-\alpha} \phi(l)/l}{2 l^{1/2}}  \gamma_{\alpha} \sum_{\substack{(a,2l) = 1 \\ a \leq Y}} \frac{\mu(a) \phi(a)/a}{a^2 \zeta_{2al}(2)} 
\\
\frac{1}{2\pi i}\int_{(-\varepsilon)} \widetilde{\Phi}(1-\alpha + \tfrac{s}{2}) X^{  \frac{s}{2}} (la^2)^{ \alpha -s} 
\frac{G(s)}{s} g_{-\alpha}(s)
 \zeta_2(1-2\alpha +2s) 
 ds,
\end{multline}
as desired.

We get $M_{-N}(k_1=1)$ from the previous expression using Remark \ref{remark:M1toM2}.  Precisely,
\begin{multline}
 M_{-N}(k_1 = 1) = -\frac{X \phi(l)/l}{2 l^{1/2}}   \sum_{\substack{(a,2l) = 1 \\ a \leq Y}} \frac{\mu(a) \phi(a)/a}{a^2 \zeta_{2al}(2)} 
\frac{1}{2\pi i}\int_{(-\varepsilon)} \widetilde{\Phi}(1 + \tfrac{s}{2}) X^{  \frac{s}{2}} (la^2)^{ - \alpha -s} \\ 
\frac{G(s)}{s} g_{\alpha}(s)
 \zeta_2(1 + 2\alpha +2s) 
 ds.
\end{multline}
Next note
\begin{equation}
\frac{1}{\zeta_{2al}(2)} \frac{\phi(l)}{l} \frac{\phi(a)}{a} = \frac{1}{\zeta_2(2)} \prod_{p|a} (1+p^{-1})^{-1} \prod_{p|l}(1+p^{-1})^{-1},
\end{equation}
so we get
\begin{multline}
 M_{-N}(k_1 = 1) = -\frac{X }{2 \zeta_2(2) l^{1/2}} \prod_{p|l} (1+p^{-1})^{-1}   \sum_{\substack{(a,2l) = 1 \\ a \leq Y}} \frac{\mu(a) }{a^2} \prod_{p|a}(1+p^{-1})^{-1}
\\
\frac{1}{2\pi i}\int_{(-\varepsilon)} \widetilde{\Phi}(1 + \tfrac{s}{2}) X^{  \frac{s}{2}} (la^2)^{-s - \alpha}  
\frac{G(s)}{s} g_{\alpha}(s)
 \zeta_2(1+2s + 2\alpha) 
 ds,
\end{multline}
which completes the proof.
\end{proof}

\section{Combining the main terms}
\label{section:mainterms}
\subsection{Gathering the terms}
Recall we have
\begin{multline}
\label{thm:combine}
 M(\alpha,l) = M_N(k=0) + M_{-N}(k = 0) + M_N(k_1 = 1) + M_{-N}(k_1 = 1)
\\
+ M_{R1} + M_{R2} + M_{-R1} + M_{-R2} + O\lp \frac{X^{f + \varepsilon}}{Y^{2f-1}} l^{1/2 + \varepsilon} + X^{1/4 + \varepsilon} Y l^{1/2 + \varepsilon} \rp.
\end{multline}
The rest of the paper is devoted to proving
\begin{mytheo}
 We have
\begin{equation}
\label{eq:MT1}
 M_N(k=0) + M_{-N}(k_1=1) + M_{R1} + M_{-R2} = \frac{X \widetilde{\Phi}(1)}{2 \zeta_2(2)} l^{-1/2 - \alpha} \zeta_{2}(1 + 2\alpha) B_{\alpha}(l),
\end{equation}
and
\begin{equation}
\label{eq:MT2}
 M_{-N}(k=0) + M_{N}(k_1=1) + M_{-R1} + M_{R2} =  \frac{X^{1-\alpha} \widetilde{\Phi}(1-\alpha)}{2 \zeta_2(2)} l^{-1/2 + \alpha} \zeta_{2}(1 - 2\alpha) B_{-\alpha}(l) \gamma_{\alpha}.
\end{equation}
\end{mytheo}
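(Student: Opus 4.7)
The plan is to prove \eqref{eq:MT1} directly, and then deduce \eqref{eq:MT2} by applying the involution of Remark \ref{remark:M1toM2} to the $\alpha\mapsto-\alpha$ version. First I pair the four terms on the left by their common structure. The expressions \eqref{eq:P1} and \eqref{eq:MR1b} for $M_N(k=0)$ and $M_{R1}$ have identical integrands on the line $\mathrm{Re}(s)=\varepsilon$, the first restricted to $a\le Y$ and the second to $a>Y$; adding them restores the full sum over squarefree $a$ coprime to $2l$ and produces a single integral containing
\[
D_1(s):=\sum_{(a,2l)=1}\frac{\mu(a)}{a^2}\frac{\zeta_{2a}(1+2\alpha+2s)}{\zeta_{2al}(2+2\alpha+2s)}.
\]
The analogous addition of \eqref{eq:M-Nk1=1} and \eqref{eq:M-R2b} collapses $M_{-N}(k_1=1)+M_{-R2}$ into a single integral on $\mathrm{Re}(s)=-\varepsilon$ containing $\zeta_2(1+2\alpha+2s)D_2(s)$, where
\[
D_2(s):=\sum_{(a,2l)=1}\frac{\mu(a)}{a^{2+2\alpha+2s}}\prod_{p|a}(1+p^{-1})^{-1}=\prod_{p\nmid 2l}\lp 1-p^{-2-2\alpha-2s}(1+p^{-1})^{-1}\rp.
\]

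Second, I shift the contour of the combined $M_{-N}(k_1=1)+M_{-R2}$ from $\mathrm{Re}(s)=-\varepsilon$ to $\mathrm{Re}(s)=+\varepsilon$. The only candidate poles crossed are at $s=0$ (from $G(s)/s$, with $G(0)=1$) and at $s=-\alpha$ (from the simple pole of $\zeta_2(1+2\alpha+2s)$), but the latter is neutralized by $G(-\alpha)=0$ from Remark \ref{remark:zero}. The residue at $s=0$ evaluates to $\widetilde{\Phi}(1)l^{-\alpha}\zeta_2(1+2\alpha)D_2(0)$, and the comparison $\prod_{p|l}(1+p^{-1})^{-1}D_2(0)=B_\alpha(l)$ follows directly from the Euler-product formula \eqref{eq:Balpha2}. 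After accounting for the overall minus sign of $M_{-N}(k_1=1)+M_{-R2}$ and the minus sign from the contour shift, the residue contribution equals $\frac{X\widetilde{\Phi}(1)}{2\zeta_2(2)}l^{-1/2-\alpha}\zeta_2(1+2\alpha)B_\alpha(l)$, which is precisely the right-hand side of \eqref{eq:MT1}.

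Third, what remains is to show that the leftover integral on $\mathrm{Re}(s)=+\varepsilon$ cancels $M_N(k=0)+M_{R1}$ identically. Both integrands share the factor $\widetilde{\Phi}(1+\tfrac{s}{2})\frac{G(s)}{s}g_\alpha(s)X^{s/2}$, so the cancellation reduces to the Dirichlet-series identity
\[
\frac{\phi(l)}{l}D_1(s)=\frac{\prod_{p|l}(1+p^{-1})^{-1}}{\zeta_2(2)}\zeta_2(1+2\alpha+2s)D_2(s).
\]
I would verify this prime by prime: with $w=1+2\alpha+2s$, the sum over squarefree $a$ in $D_1(s)$ produces an Euler product whose local factor at $p\nmid 2l$ is $(1-p^{-1-w}-p^{-2}+p^{-2-w})/(1-p^{-1-w})$, and the algebraic factorization $1-p^{-1-w}-p^{-2}+p^{-2-w}=(1-p^{-1})(1+p^{-1}-p^{-1-w})$ lets me rewrite this as $(1-p^{-2})/(1-p^{-1-w})$ times the Euler factor of $D_2(s)$. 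Multiplying over $p\nmid 2l$, the factor $\prod_{p\nmid 2l}(1-p^{-1-w})^{-1}$ combines with the local factors at $p|l$ to reconstruct $\zeta_2(1+w)$, while $\prod_{p\nmid 2l}(1-p^{-2})=1/[\zeta_2(2)\tfrac{\phi(l)}{l}\prod_{p|l}(1+p^{-1})]$ provides the remaining constant.

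The main obstacle is this last identity: each manipulation is elementary, but the bookkeeping of Euler factors at $2$, at primes $p|l$, and at primes $p|a$ must mesh precisely so that the $\mu(a)$-sum reconstructs exactly the $\zeta_2(1+2\alpha+2s)/\zeta_2(2)$ factor up to the expected $l$-dependent local factors. Once this cancellation is established, \eqref{eq:MT1} follows, and \eqref{eq:MT2} is immediate from the involution of Remark \ref{remark:M1toM2} applied to \eqref{eq:MT1} with $\alpha$ replaced by $-\alpha$.
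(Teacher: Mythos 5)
Your proposal is correct and follows essentially the same route as the paper: pairing $M_N(k=0)$ with $M_{R1}$ and $M_{-N}(k_1=1)$ with $M_{-R2}$ to complete the sums over $a$, observing that the two resulting integrands agree (your prime-by-prime identity relating $D_1$ and $D_2$ is exactly what the paper establishes by showing both equal $\zeta_2(1+2\alpha+2s)B_{\alpha+s}(l)$ up to normalization, via \eqref{eq:Balpha} and \eqref{eq:Balpha2}), and extracting the residue at $s=0$ between the contours at $\pm\varepsilon$, with the pole at $s=-\alpha$ killed by $G(-\alpha)=0$. The only difference is presentational — you shift the contour first and then verify the cancellation, while the paper identifies the common integrand first — and your deduction of \eqref{eq:MT2} via Remark \ref{remark:M1toM2} matches the paper's.
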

It suffices to show \eqref{eq:MT1} holds since \eqref{eq:MT2} follows by Remark \ref{remark:M1toM2}.

Before proving Theorem \ref{thm:combine}, we show how it proves Theorem \ref{theo:recursive}, and hence Conjecture \ref{conj:twistedconj}.  By taking $Y = X^{\frac{1}{4}}$, we get that if $M(\alpha,l) = M.T. + O(X^{f + \varepsilon} l^{1/2 + \varepsilon})$ then
\begin{equation}
M(\alpha,l) = M.T. + O( X^{\frac{f + \half}{2} + \varepsilon} l^{1/2 + \varepsilon}),
\end{equation}
where $M.T.$ is the main term from Conjecture \ref{conj:twistedconj}.

\subsection{Proving Theorem \ref{thm:combine}}
We first note that $M_N(k=0)$ given by \eqref{eq:P1} and $M_{R1}$ given by \eqref{eq:MR1b} combine naturally to give
\begin{multline}
M_{N}(k=0) + M_{R1} =  \frac{X \phi(l)/l}{2 l^{1/2 + \alpha}} \sum_{\substack{(a,2l) = 1}} \frac{\mu(a)}{a^2}    
\\
\frac{1}{2\pi i} \int_{(\varepsilon)} \widetilde{\Phi}(1 + \tfrac{s}{2})\frac{G(s)}{s} g_{\alpha}(s) X^{\frac{s}{2}} l^{-s} \frac{\zeta_{2a}(1 + 2\alpha + 2s)}{\zeta_{2al}(2 + 2\alpha + 2s)} ds
\end{multline}
We now compute the required Dirichlet series, namely
\begin{align}
 \sum_{(a,2l)=1} \frac{\mu(a)}{a^2} \frac{\zeta_{2a}(1 + 2\alpha + 2s)}{\zeta_{2al}(2 + 2\alpha + 2s)} &= \frac{\zeta_2(1 + 2\alpha + 2s)}{\zeta_{2l}(2 + 2\alpha + 2s)} \sum_{(a,2l)=1} \frac{\mu(a)}{a^2} \prod_{p | a} \frac{(1-p^{-1-2\alpha-2s})}{(1-p^{-2-2\alpha-2s})}
\\
& = \zeta_2(1 + 2\alpha + 2s) \prod_{p \nmid 2l} (1 - p^{-2-2\alpha-2s} - p^{-2} + p^{-3-2\alpha-2s}).
\end{align}
Then we use \eqref{eq:Balpha} to get
\begin{equation}
 \frac{\phi(l)}{l} \prod_{p \nmid 2l} (1 - p^{-2-2\alpha-2s} - p^{-2} + p^{-3-2\alpha-2s}) = \frac{1}{\zeta_2(2)} B_{\alpha+s}(l).
\end{equation}
That is, we have
\begin{multline}
\label{eq:right}
M_{N}(k=0) + M_{R1} =  \frac{X}{2 \zeta_2(2) l^{1/2 + \alpha}}   
\\
\frac{1}{2\pi i} \int_{(\varepsilon)} \widetilde{\Phi}(1 + \tfrac{s}{2})\frac{G(s)}{s} g_{\alpha}(s) X^{\frac{s}{2}} l^{-s} \zeta_2(1 + 2\alpha + 2s) B_{\alpha+s}(l)  ds.
\end{multline}

Now we work with $M_{-N}(k_1 = 1)$ and $M_{-R2}$ given by \eqref{eq:M-Nk1=1} and \eqref{eq:M-R2b}, respectively.  Clearly
\begin{multline}
M_{-N}(k_1 = 1) +M_{-R2} = -\frac{X }{2 \zeta_2(2) l^{1/2}} \prod_{p|l} (1+p^{-1})^{-1}   \sum_{\substack{(a,2l) = 1}} \frac{\mu(a) }{a^2} \prod_{p|a}(1+p^{-1})^{-1}
\\
\frac{1}{2\pi i}\int_{(-\varepsilon)} \widetilde{\Phi}(1 + \tfrac{s}{2}) \frac{G(s)}{s} g_{\alpha}(s) X^{  \frac{s}{2}} (la^2)^{- \alpha -s}  
 \zeta_2(1+ 2\alpha +2s) 
 ds.
\end{multline}
Then we calculate the sum over $a$ as follows:
\begin{equation}
 \sum_{(a,2l)=1} \frac{\mu(a)}{a^{2+2s+2\alpha }} \prod_{p|a} (1+p^{-1})^{-1} = \prod_{p \nmid 2l} (1- p^{-2-2\alpha-2s} (1+p^{-1})^{-1}),
\end{equation}
which equals
\begin{equation}
 B_{\alpha+s}(l) \prod_{p|l} (1+p^{-1}),
\end{equation}
in view of \eqref{eq:Balpha2}.  That is
\begin{multline}
\label{eq:left}
M_{-N}(k_1 = 1) +M_{-R2} 
\\
= -\frac{X }{2 \zeta_2(2) l^{1/2+\alpha}} 
\frac{1}{2\pi i}\int_{(-\varepsilon)} \widetilde{\Phi}(1 + \tfrac{s}{2}) \frac{G(s)}{s} g_{\alpha}(s) X^{  \frac{s}{2}} l^{-s}  
 \zeta_2(1+ 2\alpha +2s ) B_{\alpha+s}(l)
 ds.
\end{multline}

Noting that the integrands in \eqref{eq:right} and \eqref{eq:left} are identical, we see that the the sum of these two expressions is the sum of residues inside the strip $-\varepsilon \leq \text{Re}(s) \leq \varepsilon$.  The only residue is at $s=0$, by Remark \ref{remark:zero}.  It is obvious that the residue at $s=0$ is the main term of \eqref{eq:MT1}, which completes the proof.

\end{document}